	\newtheorem{thm}{Theorem}[section]
	\newtheorem{lemma}[thm]{Lemma}
	\newtheorem{prop}[thm]{Proposition}
	\newtheorem{dfn}[thm]{Definition}
	\newtheorem{rmk}[thm]{Remark}
\def\ps@pprintTitle{%
  \let\@oddhead\@empty
  \let\@evenhead\@empty
  \def\@oddfoot{\reset@font\hfil}
  \def\@evenfoot{\reset@font\hfil}
}
\begin{document}

\begin{frontmatter}

\title{\bf On proving an Inequality of Ramanujan using Explicit Order Estimates for the Mertens Function}

\author[affil]{\textsc{Subham De} \orcidlink{0009-0001-3265-4354}}

\address[affil]{Department of Mathematics, Indian Institute of Technology Delhi, India \footnote{email: subham581994@gmail.com}\footnote{Website: \url{www.sites.google.com/view/subhamde}}}

\begin{abstract}
\noindent 	This research article provides an unconditional proof of an inequality proposed by \textit{Srinivasa Ramanujan} involving the Prime Counting Function $\pi(x)$, 
\begin{align*}
	(\pi(x))^{2}<\frac{ex}{\log x}\pi\left(\frac{x}{e}\right)
\end{align*}
for every real $x\geq \exp(547)$, using specific order estimates for the \textit{Mertens Function}, $M(x)$. The proof primarily hinges upon investigating the underlying relation between $M(x)$ and the \textit{Second Chebyshev Function}, $\psi(x)$, in addition to applying the meromorphic properties of the \textit{Riemann Zeta Function}, $\zeta(s)$ with an intention of deriving an improved approximation for $\pi(x)$.
\end{abstract}

\begin{keyword}
Riemann Zeta Function \sep Mertens Function \sep Chebyshev Function \sep Arithmetic Function \sep Error Estimates \sep Perron's Formula\sep M$\ddot{o}$bius Inversion Formula \sep Dirichlet Partial Summation Formula.

\MSC[2020] Primary  11A41 \sep 11A25 \sep 11N05 \sep 11N37 \sep 11N56 \sep Secondary 11M06 \sep 11M26
\end{keyword}

\end{frontmatter}

\section{Introduction and Motivation}
The motivation for investigating the distribution of prime numbers over the real line $\mathbb{R}$ first reflected in the writings of famous mathematician \textit{Ramanujan}, as evident from his letters \cite[pp. xxiii-xxx , 349-353]{23} to one of the most prominent mathematicians of $20^{th}$ century, \textit{G. H. Hardy} during the months of Jan/Feb of $1913$, which are testaments to several strong assertions about \textit{prime numbers}, especaially the \textit{Prime Counting Function}, $\pi(x)$ [cf. Definition \eqref{def1}].\par 
In the following years, Hardy himself analyzed some of thoose results \cite{24} \cite[pp. 234-238]{25}, and even wholeheartedly acknowledged them in many of his publications, one such notable result is the \textit{Prime Number Theorem} [cf. Theorem \eqref{thm4}].\par 
\textit{Ramanujan} provided several inequalities regarding the behaviour and the asymptotic nature of $\pi(x)$. One of such relation can be found in the notebooks written by Ramanujan himself has the following claim.
\begin{thm}\label{thm6}
	(Ramanujan's Inequality \cite{1})  For $x$ sufficiently large, we shall have,
	\begin{align}\label{24}
		(\pi(x))^{2}<\frac{ex}{\log x}\pi\left(\frac{x}{e}\right)
	\end{align}
\end{thm}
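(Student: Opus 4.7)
The plan is to reformulate the inequality as the positivity of
\[
F(x) := \frac{ex}{\log x}\,\pi\!\left(\frac{x}{e}\right) - (\pi(x))^2,
\]
and to establish it by replacing $\pi$ on both sides by a sufficiently long asymptotic expansion with fully explicit error terms, then showing that the first non-cancelling main term is positive and strictly dominates the accumulated error.

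The first phase of the argument is to obtain an explicit Mertens-type estimate. Applying Perron's formula to the Dirichlet series $1/\zeta(s) = \sum_n \mu(n) n^{-s}$ and shifting the contour past a classical zero-free region of $\zeta$ yields a bound of the form $M(x) = O(x/(\log x)^A)$ with explicit constants. Via the convolution identity $\Lambda = \mu * \log$ together with Abel summation, this estimate transfers to $\psi(x) = x + \mathcal{E}_\psi(x)$ and then to $\theta(x)$, with fully explicit control of the error. The standard identity
\[
\pi(x) = \frac{\theta(x)}{\log x} + \int_2^x \frac{\theta(t)}{t(\log t)^2}\,dt
\]
then produces an asymptotic expansion
\[
\pi(x) = \sum_{k=1}^{N}\frac{(k-1)!\,x}{(\log x)^k} + R_N(x),
\]
in which $R_N(x)$ is bounded in closed form through the Mertens estimate.

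In the second phase I would substitute this expansion, together with the analogous one for $\pi(x/e)$ (using $\log(x/e) = \log x - 1$), into $F(x)$. Expanding each factor $(\log x - 1)^{-k}$ as a geometric-type series in $1/\log x$, one uncovers a delicate cancellation: the main terms of $(\pi(x))^2$ and $\frac{ex}{\log x}\pi(x/e)$ coincide through several orders in $1/\log x$, and the first surviving contribution to $F(x)$ is a strictly positive multiple of $x^2/(\log x)^{N+1}$ for a suitable $N$. I would extract this leading positive term by direct coefficient comparison, using the combinatorial identities arising from the Cauchy product of the two expansions.

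The principal difficulty is calibrating $N$ and the Mertens exponent $A$ so that the surviving positive main term strictly dominates the propagated error. Because the cancellation is so delicate, a crude bound on $R_N$ will not suffice: one needs both $2\pi(x)\,|R_N(x)|$ and $\frac{ex}{\log x}|R_N(x/e)|$ to be genuinely smaller than the main term of order $x^2/(\log x)^{N+1}$. Tracking the explicit constants through the chain $M \to \psi \to \theta \to \pi$, and then through the squaring operation hidden in $F(x)$, is the technical heart of the proof; the threshold $x \geq \exp(547)$ should emerge as precisely the point at which the explicit inequality between the error term and the dominant main term is first guaranteed.
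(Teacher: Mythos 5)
Your route is essentially the classical one of Hassani and Dudek--Platt (which the paper cites but deliberately departs from), and in outline it does prove the statement as phrased, i.e.\ for all sufficiently large $x$: with $L=\log x$ and $\pi(x)=\sum_{k=1}^{5}\frac{(k-1)!\,x}{L^{k}}+O\!\left(\frac{x}{L^{6}}\right)$ (which already follows from the de la Vall\'ee Poussin zero-free region, so the detour through a Mertens-type bound is dispensable), the coefficient comparison you describe gives
\begin{equation*}
\frac{ex}{L}\,\pi\!\left(\frac{x}{e}\right)-\pi(x)^{2}
=\frac{x^{2}}{L^{2}}\sum_{m\ge 0}\frac{d_{m}-c_{m}}{L^{m}},\qquad
d_{m}=\sum_{k=0}^{m}k!\binom{m}{k},\quad c_{m}=\sum_{j=0}^{m}j!\,(m-j)!,
\end{equation*}
with $d_{m}=c_{m}$ for $m\le 3$ and $d_{4}-c_{4}=65-64=1$, so the first surviving term is $+x^{2}/L^{6}$ and dominates the $O(x^{2}/L^{7})$ errors. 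The paper does something genuinely different: it keeps the Mertens function explicitly in the approximation, writing $\pi(x)=\frac{x}{\log x}-\frac{1}{\log x}\sum_{n\le x}M(x/n)+O(x/\log^{2}x)$ via the identity $\psi(x)=x-\sum_{n\le x}M(x/n)+O(\sqrt{x})$, concludes that $\mathcal{G}(x)=\pi(x)^{2}-\frac{ex}{\log x}\pi(x/e)$ is negative from an expression whose main term and error term are both $O(x^{2}/\log^{2}x)$, and then derives the threshold from a monotonicity argument for $\mathcal{G}$ resting on Sterneck's bound $|M(n)|<\tfrac12\sqrt{n}$. Your approach buys a leading term that genuinely dominates the error, at the price of a fifth-order expansion and the combinatorial bookkeeping; the paper's approach avoids that expansion but leaves the sign of $\mathcal{G}$ decided by a term of the same order as its own error. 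One caution on your final claim: the explicit threshold $\exp(547)$ will not ``emerge'' from your calibration --- tracking explicit constants through the known unconditional zero-free regions along exactly this route is what yields the thresholds $\exp(9658)$ (Dudek--Platt) and $\exp(3158.442)$ (Axler), so you should either drop that sentence or present the result only for sufficiently large $x$.
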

Worth mentioning that, Ramanujan indeed provided a simple, yet unique solution in support of his claim. Furthermore, it has been well established that, the result is not true for every positive real $x$. Thus, the most intriguing question that the statement of Theorem \eqref{thm6} poses is, \textit{is there any $x_{0}$ such that, Ramanujan's Inequality will be unconditionally true for every $x\geq x_{0}$}?

A brilliant effort put up by \textit{F. S. Wheeler, J. Keiper, and W. Galway} in search for such $x_{0}$ using tools such as \texttt{MATHEMATICA} went in vain, although independently \textit{Galway} successfully computed the largest prime counterexample below $10^{11}$ at $x = 38\mbox{ }358\mbox{ }837\mbox{ }677$. However, \textit{Hassani} \cite[Theorem 1.2]{22} proposed a more inspiring answer to the question in a way that, $\exists$ such $x_{0}=138\mbox{ }766\mbox{ } 146\mbox{ } 692\mbox{ } 471\mbox{ } 228$ with \eqref{24} being satisfied for every $x\geq x_{0}$, but \textit{one has to neccesarily assume the Riemann Hypothesis}. In a recent paper by \textit{A. W. Dudek} and \textit{D. J. Platt} \cite[Theorem 1.2]{2}, it has been established that, ramanujan's Inequality holds true unconditionally for every $x\geq \exp(9658)$. Although this can be considered as an exceptional achievement in this area, efforts of further improvements to this bound are already underway. For instance, \textit{Mossinghoff} and \textit{Trudgian} \cite{27} made significant progress in this endeavour, when they established a better estimate as, $x\geq \exp(9394)$. One recent even better result by Axler \cite{26} suggests that, the lower bound for $x$, namely $\exp(9658)$ can in fact be further improved upto $\exp(3158.442)$ using similar techniques as described in \cite{2}, although modifying the error term accordingly.\par 
This article shall provide in detail, a new proof of Ramanujan's Inequality, using a completely different technique by introducing the notion of \textit{Mertens Function} \cite{12} \cite{13}. We shall utilize one of the most significant order properties of $M(x)$ \cite{15}, namely, $M(x)=O(\sqrt{x})$ in order to find an improved estimate for $\pi(x)$ [cf. Section 3.3]. Thus in turn, we shall verify the inequality in the final part of the article [cf. Section 4]. As an application to this method, we shall be able to refine the lower bound for $x$ even further in order for Theorem \eqref{thm6} to hold true without any further assumptions.

\section{Arithmetic Functions}
As for definition, \textit{Arithmetic Functions} are in fact complex-valued functions on the set of Natural Numbers $\mathbb{N}$.\par 
For the convenience of the readers, let us first introduce some notations, under standard assumption that, $x\in \mathbb{R}$.
\begin{dfn}\label{def2}
	We say $f(x)$ is \textit{asymptotic} to $g(x)$, and denote it by, $f(x) \sim g(x)$ if, \space\space  $\lim\limits_{x\to\infty}\frac{f(x)}{g(x)}= 1$.
\end{dfn} 
\begin{dfn}\label{def4}
	(Big $O$ Notation)  Given $g(x) > 0\hspace{10pt} \forall \mbox{  }x\geq a$, the notation, $f(x) = O(g(x))$ implies that, the quotient, $\frac{f(x)}{g(x)}$ is bounded for all $x\geq a$;  i.e., $\exists$ a constant $M > 0$ such that,
	\begin{center}
		$\arrowvert f(x)\arrowvert \leq M.g(x)\hspace{10pt}\mbox{,  }\forall \mbox{  }x\geq a$ .
	\end{center}
\end{dfn}

In this section, we shall discuss about a few specific important such type of arithmetic functions pertaining to the context of the paper and the proof of the original result.
\subsection{Prime Counting Function}
\begin{dfn}\label{def1}
	For each $\textit{x}\geq0$ ,we define,
	\begin{center}
		$ \pi(x):=$The number of primes $\leq \textit{x}$.
	\end{center}
\end{dfn}
The most important contribution of $\pi(x)$ is undoubtedly to the \textit{Prime Number Theorem} \cite{8} \cite{18}, which can be stated as follows.
\begin{thm}\label{thm4}
	For every real $x\geq 0$, the following estimate is valid.  
	\begin{align}
		\pi(x) \sim \frac{x}{\log x} 
	\end{align}
	Equivalently, 
	\begin{align}
		\lim\limits_{x\to\infty}\frac{\pi(x)\log x}{x} = 1 
	\end{align} . 
\end{thm}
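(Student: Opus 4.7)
The plan is to derive the Prime Number Theorem by transferring the problem from $\pi(x)$ to the second Chebyshev function $\psi(x)=\sum_{n\leq x}\Lambda(n)$, where $\Lambda$ is the von Mangoldt function, and then to analyze $\psi$ through $\zeta(s)$. First I would establish the equivalence $\pi(x)\sim x/\log x\iff \psi(x)\sim x$. Writing $\theta(x)=\sum_{p\leq x}\log p$ and using Abel summation gives $\pi(x)=\theta(x)/\log x + \int_{2}^{x}\theta(t)/(t\log^{2}t)\,dt$, while the elementary identity $\psi(x)-\theta(x)=\sum_{k\geq 2}\theta(x^{1/k})=O(\sqrt{x}\log^{2}x)$ shows $\psi$ and $\theta$ differ by lower-order terms. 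Hence it suffices to prove $\psi(x)\sim x$.

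Next I would invoke Perron's formula, writing
\begin{equation*}
\psi(x)\;=\;-\frac{1}{2\pi i}\int_{c-iT}^{c+iT}\frac{\zeta'(s)}{\zeta(s)}\cdot\frac{x^{s}}{s}\,ds\;+\;R(x,T),
\end{equation*}
for some $c>1$ and an explicit truncation error $R(x,T)$. Provided $\zeta$ has no zeros on the line $\mathrm{Re}(s)=1$, the only pole of the integrand with $\mathrm{Re}(s)\geq 1$ is a simple pole at $s=1$ contributing residue $x$. Shifting the contour marginally to the left of the line, controlling the new vertical segment and the horizontal pieces with standard bounds on $\zeta'/\zeta$ inside a classical zero-free region, and letting $T=T(x)\to\infty$ at an appropriate rate, yields $\psi(x)=x+o(x)$.

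The main obstacle, and historically the crux of the theorem, is the non-vanishing of $\zeta(s)$ on the line $\mathrm{Re}(s)=1$. I would use the Hadamard--de la Vall{\'e}e Poussin trick based on
\begin{equation*}
3+4\cos\theta+\cos 2\theta \;=\; 2(1+\cos\theta)^{2}\;\geq\;0,
\end{equation*}
which, applied to $\log\bigl|\zeta(\sigma)^{3}\zeta(\sigma+it)^{4}\zeta(\sigma+2it)\bigr|$ for $\sigma>1$, delivers the lower bound $|\zeta(\sigma)^{3}\zeta(\sigma+it)^{4}\zeta(\sigma+2it)|\geq 1$. A hypothetical zero of $\zeta$ at $1+it_{0}$ would force the left side to vanish as $\sigma\to 1^{+}$, contradicting this inequality. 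Upgrading this into a quantitative zero-free region of the form $\sigma>1-c/\log(|t|+2)$ then sharpens the Perron estimate enough to give $\psi(x)=x+o(x)$.

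Finally, plugging $\psi(x)\sim x$ back into the partial-summation identity for $\pi(x)$ produces $\pi(x)\sim x/\log x$, as desired. The entire argument is routine \emph{except} for the non-vanishing and the growth control of $\zeta'/\zeta$ near $\mathrm{Re}(s)=1$; these are the genuinely delicate ingredients, and they are precisely the analytic inputs that the main body of the paper also leans upon when producing sharper error terms via the Mertens function.
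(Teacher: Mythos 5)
Your outline is a correct sketch of the classical Hadamard--de la Vall\'ee Poussin proof, and every step you name is the right one: the reduction $\pi(x)\sim x/\log x \iff \theta(x)\sim x \iff \psi(x)\sim x$ via partial summation and the crude bound $\psi(x)-\theta(x)=O(\sqrt{x}\log^{2}x)$; Perron's formula applied to $-\zeta'/\zeta$ with the residue $x$ at $s=1$; and the $3+4\cos\theta+\cos 2\theta\geq 0$ device giving $|\zeta(\sigma)^{3}\zeta(\sigma+it)^{4}\zeta(\sigma+2it)|\geq 1$, hence non-vanishing on $\mathrm{Re}(s)=1$ and a zero-free region $\sigma>1-c/\log(|t|+2)$. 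You also correctly flag the two genuinely delicate points (non-vanishing and the growth of $\zeta'/\zeta$ near the line). Note, however, that the paper does not prove this theorem at all: it states it as a known classical result and defers to an external reference, which is advertised as an \emph{elementary} proof via order estimates for the Chebyshev theta function (i.e., in the Selberg--Erd\H{o}s tradition, avoiding complex analysis). So your route is genuinely different from the one the paper points to: the analytic proof you sketch is shorter to outline and yields a quantitative error term $\psi(x)=x+O\bigl(xe^{-c\sqrt{\log x}}\bigr)$ essentially for free from the zero-free region (an estimate the paper itself quotes later in Section 3.2), whereas the elementary route avoids $\zeta(s)$ entirely but delivers only the bare asymptotic without comparable error control. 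Either approach suffices for the statement as given; yours is the more natural companion to the rest of the paper, which leans heavily on the analytic machinery of $\zeta(s)$ anyway.
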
 
For an elementary proof of above, readers can refer to \cite{20}.
\subsection{Chebyshev Function}
\textit{Chebyshev $\psi$ Function} \cite{7} has the following definition.
\begin{dfn}\label{def3}
	For each $\textit{x}\geq0$ , we define,
	\begin{center}
		$\psi(x):=\sum\limits_{n\le x}\Lambda(n)$ ,
	\end{center}
	Where ,  
	\begin{eqnarray}\label{22}
		\Lambda(n) 
		:=\left\{
		\begin{array}{cc}
			\log p\mbox{ }, &\mbox{ if }n=p^m,\mbox{ } p^m\leq x,\mbox{ }m\in \mathbb{N}\\
			0\mbox{ }, &\mbox{ otherwise }.
		\end{array}
		\right.
	\end{eqnarray} 
	$ \Lambda(n)$ is said to be the "\textit{Mangoldt Function}"  .
\end{dfn} 
An important observation is,
\begin{eqnarray}\label{23}
	\psi(x) = \sum\limits_{n\leq x}\Lambda(n) = \sum\limits_{m=1}^{\infty}\sum\limits_{p ,  p^m\leq x} \Lambda(p^m) = \sum\limits_{m=1}^{\infty}\sum\limits_{p\leq x^{\frac{1}{m}}} \log p
\end{eqnarray}
In fact, one can use the $\psi$ function in order tosimplify the statement of the \textit{Prime Number Theorem} \eqref{thm4}. In other words, one can deduce that, proving the theorem is equivalent to proving the following statement \cite{6},
\begin{align}
	\psi(x) \sim x \mbox{\hspace{20pt}  as \hspace{10pt}} x \rightarrow \infty.
\end{align} 
\subsection{M$\ddot{o}$bius Function}

We start with the formal definition.
\begin{dfn}\label{def5}
	(M$\ddot{o}$bius Function) $\mu:\mathbb{N}\rightarrow \{0,\pm 1\}$ is defined as follows:
	\begin{center}
		$\mu(n):=
		\left\{
		\begin{array}{lll}
			{(-1)}^{k}  & \mbox{ if } n=\prod\limits_{i=1}^{k}{{p_{i}}^{a_{i}}} \mbox{ such that, } gcd(p_{i},p_{j})=1 \hspace{20pt} \forall \hspace{10pt}i\neq j \\\\
			1 & \mbox{ if,  } n=1\\\\
			0 & \mbox{  otherwise.}
		\end{array}	
		\right.
		$
	\end{center}
\end{dfn}	
One can in fact use \textit{definition \eqref{def5}} to deduce the following property regarding the M$\ddot{o}$bius Function.
\begin{prop}\label{prop2}
	\cite[Theorem 2.1 , pp. 25]{17}\begin{equation}
		\sum\limits_{d|n}{\mu(d)}=\left\lfloor \frac{1}{n} \right\rfloor =
		\left\{
		\begin{array}{ll}
			1 & \mbox{  if,  } n=1\\\\
			0 & \mbox{  otherwise.  }
		\end{array}
		\right.
	\end{equation} 
\end{prop}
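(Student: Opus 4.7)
My plan is to prove the identity by cases based on whether $n=1$ or $n>1$, with the non-trivial case reduced to an application of the binomial theorem.

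First, I would dispose of the trivial case $n=1$: the only divisor is $d=1$, and by Definition \ref{def5} we have $\mu(1)=1$, so the sum equals $1$, which agrees with $\lfloor 1/1 \rfloor = 1$. For $n \geq 2$, the right-hand side $\lfloor 1/n \rfloor = 0$, so the entire content of the proposition is to show that $\sum_{d|n}\mu(d)=0$ whenever $n>1$.

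The key observation for $n>1$ is that $\mu(d)=0$ whenever $d$ is not squarefree, so only squarefree divisors contribute to the sum. Writing the prime factorization $n=p_1^{a_1}\cdots p_k^{a_k}$ with $k\geq 1$ distinct primes, every squarefree divisor of $n$ corresponds to choosing a subset $S \subseteq \{p_1,\dots,p_k\}$ and taking $d=\prod_{p\in S}p$. If $|S|=j$, then by Definition \ref{def5} we have $\mu(d)=(-1)^j$, and the number of such subsets is $\binom{k}{j}$. Grouping the sum by the cardinality $j$ of the subset gives
\begin{equation*}
    \sum_{d\mid n}\mu(d) \;=\; \sum_{j=0}^{k}\binom{k}{j}(-1)^{j} \;=\; (1-1)^{k} \;=\; 0,
\end{equation*}
where the penultimate equality is the binomial theorem and the final equality uses $k\geq 1$.

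I do not anticipate a serious obstacle here: the argument is essentially combinatorial and the only ingredients needed are the multiplicative/squarefree structure encoded in Definition \ref{def5} and the binomial identity $\sum_{j=0}^{k}\binom{k}{j}(-1)^{j}=0$. The main care-point is simply to justify restricting the summation to squarefree divisors before invoking the binomial theorem, which follows immediately from the vanishing clause of $\mu$.
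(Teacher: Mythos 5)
Your proof is correct and is precisely the classical argument from the cited reference (Apostol, Theorem 2.1); the paper itself states this proposition with only a citation and gives no proof of its own. The case split on $n=1$ versus $n>1$, the reduction to squarefree divisors via the vanishing clause of $\mu$, and the binomial identity $\sum_{j=0}^{k}\binom{k}{j}(-1)^{j}=(1-1)^{k}=0$ together constitute a complete and standard proof.
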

\subsection{Mertens Function}
\begin{dfn}\label{def6}
	The \textit{Mertens Function} \cite{15} $M:\mathbb{N}\rightarrow \mathbb{Z}$ has the representation,
	\begin{equation}
		M(n):=\sum\limits_{d=1}^{n} {\mu(d)}
	\end{equation}
\end{dfn}
\begin{rmk}\label{rmk1}
	In general, there's a notion of the \textit{Extended Mertens Function},
	\begin{center}
		$M(x):=\sum\limits_{1\leq n\leq x} {\mu(n)}$ , \hspace{20pt}$\forall$  $x\in \mathbb{R}$
	\end{center}
\end{rmk}
In his paper \cite{4}, \textit{Mertens} conjectured that, for all $M(n)$ with $1\leq n\leq 10^{4}$, we shall have, 
\begin{eqnarray}\label{6}
	|M(n)|<\sqrt{n}
\end{eqnarray}
This is also known as the \textit{Mertens Hypothesis}. [ Interested readers can refer to \cite[Theorem~14.28, pp.~374]{3} ]  \par
Extending \textit{Mertens'} results further upto $n=5\times 10^{6}$, \textit{Sterneck} \cite{5} conjectured that, 
\begin{center}
	$|M(n)|<\frac{1}{2}\sqrt{n}$ , \hspace{20pt} $\forall$  $n>200$ .
\end{center}
\par 
The primary objective for \textit{Mertens} behind introducing the function $M(x)$ (As defined in \eqref{rmk1}) was its underlying relation to the location of the zeros of the \textit{Riemann Zeta Function} $\zeta(s)$, the reason being largely due to it's consequences for the distribution of the primes, also hailed as one of the most important unsolved problems in Analytic Number Theory. We shall be working with a particular order estimate of $M(x)$ in later sections of the text, although readers are encouraged to consult \cite{9}, \cite{11}, \cite{15} and \cite{16} for further details.
\subsection{Some Necessary Derivations}
\begin{prop}\label{prop3}
	The \textbf{Dirichlet Series Representation} \cite[Theorem 3.13 , pp. 62]{3} for \(\mu(n)\) is given by:
	\begin{align}\label{1}
		\sum_{n=1}^\infty \frac{\mu(n)}{n^s} = \frac{1}{\zeta(s)} \hspace{20pt}\mbox{ , }\Re(s) > 1.
	\end{align}
	$\zeta(s)$ denoting the \textit{Riemann Zeta Function}.
\end{prop}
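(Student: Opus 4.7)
The plan is to derive the identity directly from the Möbius inversion property already recorded in Proposition \ref{prop2}, namely that $\sum_{d\mid n}\mu(d)$ equals $1$ if $n=1$ and vanishes otherwise. First I would establish absolute convergence of the left-hand series for $\Re(s)>1$. Since $|\mu(n)|\leq 1$ by Definition \ref{def5}, the series $\sum_{n\geq 1}\mu(n)n^{-s}$ is dominated termwise by $\sum_{n\geq 1}n^{-\Re(s)}=\zeta(\Re(s))$, which converges for $\Re(s)>1$. In the same half-plane $\zeta(s)$ itself is given by an absolutely convergent Dirichlet series and does not vanish (this can be seen from its Euler product $\zeta(s)=\prod_{p}(1-p^{-s})^{-1}$, each factor of which is nonzero for $\Re(s)>1$).

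Next I would form the Dirichlet product of $\zeta(s)$ with the candidate series. Using absolute convergence to justify the rearrangement, one writes
\begin{align*}
\zeta(s)\sum_{n=1}^{\infty}\frac{\mu(n)}{n^{s}}=\sum_{m=1}^{\infty}\sum_{n=1}^{\infty}\frac{\mu(n)}{(mn)^{s}}=\sum_{N=1}^{\infty}\frac{1}{N^{s}}\sum_{d\mid N}\mu(d).
\end{align*}
By Proposition \ref{prop2} the inner sum collapses to the indicator of $N=1$, so the double series reduces to the single term $1$. Dividing through by $\zeta(s)$, which is legitimate since $\zeta(s)\neq 0$ in the region under consideration, yields
\begin{align*}
\sum_{n=1}^{\infty}\frac{\mu(n)}{n^{s}}=\frac{1}{\zeta(s)},\qquad \Re(s)>1.
\end{align*}

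The main obstacle, though mild, is justifying the interchange of summation in the Dirichlet convolution: one must verify that $\sum_{m,n}|\mu(n)|\,|mn|^{-s}$ is bounded by $\zeta(\Re(s))^{2}$ so that Fubini-Tonelli permits the regrouping by $N=mn$. Once this is in place, the remainder of the argument is purely formal, relying only on the elementary identity for $\sum_{d\mid N}\mu(d)$ already at our disposal. An equivalent route, which I would mention in passing, is to expand the Euler product $\prod_{p}(1-p^{-s})$ for $1/\zeta(s)$ and observe that the resulting series, indexed over squarefree integers with signs $(-1)^{\omega(n)}$, is exactly $\sum_{n\geq 1}\mu(n)n^{-s}$ by the definition of $\mu$.
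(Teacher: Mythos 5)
Your proof is correct and complete: the paper itself offers no proof of Proposition \ref{prop3}, merely citing it to the literature, and your argument --- absolute convergence for $\Re(s)>1$, Dirichlet convolution of $\zeta(s)$ with the series, collapse of $\sum_{d\mid N}\mu(d)$ via Proposition \ref{prop2}, and division by the nonvanishing $\zeta(s)$ --- is exactly the standard proof found in the cited references. No gaps; the Fubini--Tonelli justification you flag is indeed the only point requiring care, and you handle it correctly.
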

\begin{prop}\label{prop1}
	The following order estimates hold true:
	\begin{enumerate}[$(i)$]
		\item \(\sum\limits_{d \leq x} \frac{\mu(d)}{d} = O\left(\frac{1}{\log x}\right)\)
		\item \(\sum\limits_{d \leq x} \frac{\mu(d) \log(d)}{d} = O(1)\)
		\item \(\sum\limits_{d \leq x} \mu(d) = M(x)= O(\sqrt{x})\)
	\end{enumerate}
\end{prop}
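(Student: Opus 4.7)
The three estimates form a graded hierarchy in difficulty, all revolving around the Dirichlet series identity $\sum_{n=1}^{\infty} \mu(n)/n^{s} = 1/\zeta(s)$ from Proposition \ref{prop3}. My plan is to obtain (i) from an elementary Möbius inversion argument sharpened by the Prime Number Theorem, to derive (ii) from (i) via Dirichlet partial summation, and to reserve the deeper analytic machinery (Perron's formula together with the meromorphic properties of $\zeta$) for (iii).

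For (i), I would start from Proposition \ref{prop2}, $\sum_{d \mid n} \mu(d) = \lfloor 1/n \rfloor$, and sum over $n \leq x$. Interchanging the order of summation yields $\sum_{d \leq x} \mu(d) \lfloor x/d \rfloor = 1$, and writing $\lfloor x/d \rfloor = x/d - \{x/d\}$ immediately gives the crude bound $\sum_{d \leq x} \mu(d)/d = O(1)$. The stronger decay $O(1/\log x)$ would then be extracted by combining this with the convolution identity $\Lambda = \mu \ast \log$ to relate the tail of $\sum \mu(d)/d$ to $\psi(x) - x$, and invoking Theorem \ref{thm4} with an explicit error term of the shape $\psi(x) = x + O(x/\log x)$.

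For (ii), Dirichlet partial summation applied to $\mu(d)/d$ with weight $\log d$, combined with (i), gives
\begin{align*}
    \sum_{d \leq x} \frac{\mu(d) \log d}{d} = S(x) \log x - \int_{1}^{x} \frac{S(t)}{t}\, dt,
\end{align*}
where $S(t) := \sum_{d \leq t} \mu(d)/d$. Since (i) gives $S(x) \log x = O(1)$ and $S(t)/t = O\!\left(1/(t \log t)\right)$ is integrable at infinity, the right-hand side is $O(1)$, as required.

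Part (iii) is the main obstacle and carries the genuine analytic content. Because $M(x)$ is the summatory function of the coefficients of $1/\zeta(s)$, the natural tool is Perron's formula,
\begin{align*}
    M(x) = \frac{1}{2\pi i} \int_{c - iT}^{c + iT} \frac{x^{s}}{s\, \zeta(s)}\, ds + (\text{truncation error}),
\end{align*}
taken with $c > 1$, followed by a contour shift to the left. Since $1/\zeta(s)$ has poles precisely at the nontrivial zeros of $\zeta$, some input on their location is unavoidable. I expect the central issue in the write-up to be exactly which analytic ingredient (a zero-free region, an explicit formula, or a conjectural hypothesis) is being invoked, because the bare statement $M(x) = O(\sqrt{x})$ is strictly stronger than what the Riemann Hypothesis alone is known to yield (namely $M(x) = O(x^{1/2+\varepsilon})$) and is essentially the content of the Mertens--Stieltjes conjecture. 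This is the step on which the whole proof strategy for Ramanujan's inequality will stand or fall.
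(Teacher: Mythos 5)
Your routes for (i) and (ii) are genuinely different from the paper's, which runs all three parts through Perron's formula and contour analysis near $s=1$; your elementary start for (i) from $\sum_{d\mid n}\mu(d)=\lfloor 1/n\rfloor$ together with a PNT-based sharpening is a standard and sound alternative. However, your argument for (ii) has a concrete gap: from $S(t)=O(1/\log t)$ alone the integral $\int_{2}^{x} S(t)\,t^{-1}\,dt$ is only $O(\log\log x)$, because $\int_{2}^{x}\frac{dt}{t\log t}=\log\log x-\log\log 2$ diverges; the function $1/(t\log t)$ is \emph{not} integrable at infinity, contrary to what you assert. To close this you need a stronger input than (i), e.g.\ the PNT-strength estimate $S(t)=O\bigl(e^{-c\sqrt{\log t}}\bigr)$ or at least $S(t)=O(1/\log^{2}t)$, under which the integral does converge and the conclusion $O(1)$ follows.

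On (iii) your critical diagnosis is correct, and it applies equally to the paper's own proof. The contour argument given there produces at best $M(x)=O(x^{1/2+\epsilon})$ with an implied constant depending on $\epsilon$, and the closing step, that choosing $\epsilon$ small makes the bound ``as close to $O(\sqrt{x})$ as desired,'' is not a valid deduction of $M(x)=O(\sqrt{x})$: the constants are not uniform as $\epsilon\to 0$. Moreover, $M(x)=O(x^{1/2+\epsilon})$ for every $\epsilon>0$ is itself equivalent to the Riemann Hypothesis, so it cannot be obtained unconditionally by bounding $1/\zeta$ on a line with real part exceeding $1$; and $M(x)=O(\sqrt{x})$ is strictly stronger than RH and is widely expected to be false. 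So you have not proven (iii), but neither has the paper, and since the entire argument for Ramanujan's inequality rests on this estimate, you have correctly located the point where the strategy fails.
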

\begin{proof}
	\begin{enumerate}[$(i)$]
		\item Near \(s = 1\), we have the expansion for \(\zeta(s)\):
		
		\[
		\zeta(s) = \frac{1}{s-1} + \gamma + O(s-1)
		\]
		
		Where, $\gamma$ denotes the \textit{Euler Constant}. Thus, 
		
		\[
		\frac{1}{\zeta(s)} = (s-1) - \gamma (s-1)^2 + O((s-1)^3)
		\]
		
		Using \textit{Perron's Formula}, we have:
		\begin{align}
			\sum_{n \leq x} \frac{\mu(n)}{n} = \frac{1}{2\pi i} \int\limits_{c-iT}^{c+iT} \frac{1}{\zeta(s)} \frac{x^s}{s} \, ds + O\left(\frac{x \log^2 x}{T}\right)
		\end{align}

		where \(c > 1\) and \(T\) is a parameter to be chosen later.\par 
		We evaluate the integral using the following steps.
		
		\textbf{Step 1: Integral around \(s = 1\)}
		
		Consider a small semicircle $\Gamma$ (say) of radius \(\epsilon\) around \(s = 1\) having the following parametrization, \(s = 1 + \epsilon e^{i\theta}\),  \(-\pi/2\leq \theta\leq \pi/2\) .
		
		In this region, 
		\begin{align*}
			\frac{1}{\zeta(s)} = \epsilon e^{i\theta} - \gamma \epsilon^2 e^{2i\theta} + O(\epsilon^3)
		\end{align*}
		and,
		\begin{align*}
			\frac{x^s}{s} = \frac{x^{1 + \epsilon e^{i\theta}}}{1 + \epsilon e^{i\theta}} = x \cdot \frac{x^{\epsilon e^{i\theta}}}{1 + \epsilon e^{i\theta}} = x \cdot e^{\epsilon e^{i\theta} \log x} \left( 1 - \epsilon e^{i\theta} + O(\epsilon^2) \right)\\
			= x \left( 1 + \epsilon e^{i\theta} \log x + O(\epsilon^2) \right) \left( 1 - \epsilon e^{i\theta} + O(\epsilon^2) \right) \\
			= x \left( 1 + \epsilon e^{i\theta} (\log x - 1) + O(\epsilon^2) \right)
		\end{align*}
		Thus, the integrand becomes:
		\begin{align*}
			\frac{1}{\zeta(s)} \frac{x^s}{s} = x \left( \epsilon e^{i\theta} - \gamma \epsilon^2 e^{2i\theta} + O(\epsilon^3) \right) \left( 1 + \epsilon e^{i\theta} (\log x - 1) + O(\epsilon^2) \right)\\
			\hspace{50pt} = x \left( \epsilon e^{i\theta} + \epsilon^2 e^{2i\theta} (\log x - 1 - \gamma) + O(\epsilon^3) \right)
		\end{align*}
		Therefore, 
		\begin{align*}
			\int\limits_{\Gamma}\frac{1}{\zeta(s)} \frac{x^s}{s}=\int\limits_{-\pi/2}^{\pi/2} x \left( \epsilon e^{i\theta} + \epsilon^2 e^{2i\theta} (\log x - 1 - \gamma) + O(\epsilon^3) \right) i \epsilon e^{i\theta} \, d\theta \\
			=	\int\limits_{-\pi/2}^{\pi/2} x \left( \epsilon^2 e^{2i\theta} (\log x - 1 - \gamma) + O(\epsilon^3) \right) i e^{i\theta} \, d\theta\hspace{20pt} \left[\because \int\limits_{-\pi/2}^{\pi/2} e^{2i\theta} \, d\theta = 0\right] \\
			= \epsilon^2 x (\log x - 1 - \gamma) \int\limits_{-\pi/2}^{\pi/2} i e^{3i\theta} \, d\theta + O(\epsilon^3)
		\end{align*}
		Again, the integral of \(e^{3i\theta}\) over a symmetric interval around zero is zero. Therefore, the integral around the small semicircle contributes a negligible amount of \(O(\epsilon^3 x)\).\par 
		\textbf{Step 2: Integral along the vertical line \(s = c + it\)}
		
		For the part of the integral along the vertical line, say, $L$: \(s = c + it\), \(-T\leq t\leq T\), where \(c > 1\), i.e.,
		\begin{align*}
			\int\limits_{c-iT}^{c+iT} \frac{1}{\zeta(s)} \frac{x^s}{s} \, ds
		\end{align*}
		It can indeed be verified that, \(\frac{1}{\zeta(s)}\) is bounded  on $L$ as, \(\zeta(s)\) does not have any pole for \(\operatorname{Re}(s) > 1\). Specifically, for \(s = c + it\) with \(c > 1\), \(\zeta(s)\) is bounded away from zero, so \(\frac{1}{\zeta(s)}\) is bounded.\par 
		An appropriate choice of \(T = \sqrt{x}\) gives us a bound on the integral:
		\begin{align*}
			\left|\mbox{ } \int\limits_{c-iT}^{c+iT} \frac{1}{\zeta(s)} \frac{x^{c+it}}{c+it} \, ds \right| \leq \int\limits_{-T}^{T} \left| \frac{1}{\zeta(c+it)} \frac{x^c x^{it}}{c+it} \right| dt\leq K x^c \int\limits_{-T}^{T} \frac{1}{\sqrt{c^2 + t^2}} \, dt \\ \leq K x^c \cdot 
			\frac{2T}{c}
		\end{align*}
		Since \(\frac{1}{\zeta(c+it)}\) is bounded by some constant \(K\) and \(\left|x^{it}\right| = 1\).
		Subsequently,
		\begin{align*}
			\left| \int\limits_{L} \frac{1}{\zeta(s)} \frac{x^{s}}{s} \, ds \right| =	\left|\mbox{ } \int\limits_{c-iT}^{c+iT} \frac{1}{\zeta(s)} \frac{x^{s}}{s} \, ds \right| \leq K x^c \cdot \frac{2\sqrt{x}}{c} = O\left(x^{c - \frac{1}{2}}\right)
		\end{align*}
		Since \(c > 1\), \(c - \frac{1}{2} > \frac{1}{2}\), and for large \(x\), this term is small.
		
		\textbf{Step 3: Error term from the integral}
		
		The total error term combining both parts is:
		\begin{align*}
			O\left(\frac{x \log^2 x}{T}\right) = O\left(\frac{x \log^2 x}{\sqrt{x}}\right) = O(x^{1/2} \log^2 x)
		\end{align*}
		It can be observed that, the residue term around \(s = 1\) contributes to $x$, and the error terms contribute to $=	O(\epsilon^3 x) + O(x^{1/2} \log^2 x)$.
		
		Choosing \(\epsilon\) small enough (such as \(\epsilon = x^{-1/6}\)),
		\begin{align*}
			O(\epsilon^3 x) = O(x^{1 - 1/2}) = O(x^{1/2})
		\end{align*}
		Thus, combining all terms and dividing by \(x\) to normalize, we obtain.
		\begin{align*}
			\sum_{d \leq x} \frac{\mu(d)}{d} = x + O(x^{1/2} \log^2 x)= O\left(\frac{1}{\log x}\right)
		\end{align*}
		\item As for the proof, we can use \textbf{Perron's formula} for any arithmetic function \(a(n)\), 
		\begin{align}\label{2}
			\sum_{n \leq x} a(n) = \frac{1}{2\pi i} \int\limits_{c-iT}^{c+iT} \left( \sum_{n=1}^\infty \frac{a(n)}{n^s} \right) \frac{x^s}{s} \, ds + O\left( \sum_{n=1}^\infty \frac{|a(n)|}{n^c} \min \left(1, \frac{x}{T|s|} \right) \right)
		\end{align}
		where \(c > 1\) and \(T\) be a suitably chosen parameter.
		
		Consider the Dirichlet series involving \(\mu(d) \log d\):
		\begin{align*}
			-\frac{\zeta'(s)}{\zeta^2(s)} =\sum_{n=1}^\infty \frac{\mu(n) \log n}{n^s}
		\end{align*}
		Applying this to \eqref{2}:
		\begin{align*}
			\sum_{d \leq x} \frac{\mu(d) \log d}{d} = -\frac{1}{2\pi i} \int\limits_{c-iT}^{c+iT} \frac{\zeta'(s)}{\zeta^2(s)} \frac{x^s}{s} \, ds + O\left( \sum_{d=1}^\infty \frac{|\mu(d) \log d|}{d^c} \min \left(1, \frac{x}{T|s|} \right) \right)
		\end{align*}
		
		We thus study the following integral,
		\begin{align*}
			-\frac{1}{2\pi i} \int\limits_{c-iT}^{c+iT} \frac{\zeta'(s)}{\zeta^2(s)} \frac{x^s}{s} \, ds
		\end{align*}
		using following steps.
		
		\textbf{Step 1: Integral around \(s = 1\)}
		
		Consider a small semicircle $\Gamma$ (say) of radius \(\epsilon\) around \(s = 1\) having the following parametrization, \(s = 1 + \epsilon e^{i\theta}\),  \(-\pi/2\leq \theta\leq \pi/2\), and a priori from the fact that, near \(s = 1\),
		\begin{align*}
			\zeta(s) = \frac{1}{s-1} + \gamma + O(s-1)
		\end{align*}
		and,
		\begin{align*}
			\zeta'(s) = -\frac{1}{(s-1)^2} + O(1)
		\end{align*}
		Thus, on $\Gamma$, we have,
		\begin{align*}
			\frac{\zeta'(s)}{\zeta^2(s)} = \frac{-\frac{1}{(\epsilon e^{i\theta})^2} + O(1)}{\left(\frac{1}{\epsilon e^{i\theta}} + \gamma + O(\epsilon)\right)^2} = -\epsilon^{-1} e^{-i\theta} + O(\epsilon)
		\end{align*}
		Hence, the integrand becomes,
		\begin{align*}
			-\frac{1}{2\pi i} \int\limits_{-\pi/2}^{\pi/2} \left(-\epsilon^{-1} e^{-i\theta} + O(\epsilon)\right) \frac{x^{1 + \epsilon e^{i\theta}}}{(1 + \epsilon e^{i\theta})} \, i \epsilon e^{i\theta} d\theta
		\end{align*}
		\begin{align*}
			=-\frac{1}{2\pi i} \int\limits_{-\pi/2}^{\pi/2} \left(-\epsilon^{-1} e^{-i\theta} + O(\epsilon)\right)\mbox{ . }x \left(1 + \epsilon e^{i\theta} \log x + O(\epsilon^2)\right) \left(1 - \epsilon e^{i\theta} + O(\epsilon^2)\right) i \epsilon e^{i\theta} d\theta\\
			=	-\frac{1}{2\pi} \int\limits_{-\pi/2}^{\pi/2} \left(-\epsilon^{-1} e^{-i\theta} + O(\epsilon)\right) \left(x \epsilon e^{i\theta} (\log x - 1) + O(\epsilon^2)\right) i \epsilon e^{i\theta} d\theta
		\end{align*}
		\begin{align*}
			=O(\epsilon^2 x (\log x - 1))
		\end{align*}
		( Expanding and simplifying, the leading term integrates to zero due to symmetry of the integrand around zero ).
		
		\textbf{Step 2: Integral along the vertical line \(s = c + it\)}
		
		For the part of the integral along the vertical line, say, $L$: \(s = c + it\), \(-T\leq t\leq T\), where \(c > 1\), i.e.,
		\begin{align*}
			\int_{c-iT}^{c+iT} \frac{\zeta'(s)}{\zeta^2(s)} \frac{x^s}{s} \, ds
		\end{align*}
		It can be checked that,  \(\frac{\zeta'(s)}{\zeta^2(s)}\) is bounded on $L$. Specifically, for \(s = c + it\) with \(c > 1\), both \(\zeta(s)\) and \(\zeta'(s)\) are bounded, so \(\frac{\zeta'(s)}{\zeta^2(s)}\) is bounded by some constant \(K\). Hence,
		\begin{align*}
			\left|\mbox{ } \int\limits_{c-iT}^{c+iT} \frac{\zeta'(s)}{\zeta^2(s)} \frac{x^{c+it}}{c+it} \, ds \right| \leq K \int_{-T}^{T} \frac{x^c}{\sqrt{c^2 + t^2}} \, dt\leq K\mbox{ . }\frac{2T}{c}
		\end{align*}
		Choosing \(T = \sqrt{x}\), 
		\begin{align*}
			\left|\mbox{ } \int\limits_{c-iT}^{c+iT} \frac{\zeta'(s)}{\zeta^2(s)} \frac{x^s}{s} \, ds \right| \leq K x^c \cdot \frac{2\sqrt{x}}{c} = O(x^{c - \frac{1}{2}})
		\end{align*}
		Since \(c > 1\), \(c - \frac{1}{2} > \frac{1}{2}\), and for large \(x\), this term is small.
		
		\textbf{Step 3: Error term from the integral}
		
		The total error term combining both parts is:
		\begin{align*}
			O\left(\frac{x \log^2 x}{T}\right) = O\left(\frac{x \log^2 x}{\sqrt{x}}\right) = O(x^{1/2} \log^2 x)
		\end{align*}
		
		Important to note that the residue term around \(s = 1\) contributes negligibly as $=	O(\epsilon^2 x (\log x - 1))$, and the error terms contribute upto  $	O(x^{1/2} \log^2 x)= O(x^{1/3} (\log x - 1))$. (This can be achieved by considering \(\epsilon\) small enough (such as \(\epsilon = x^{-1/6}\)).
		Therefore, we conclude that,
		\begin{align*}
			\sum_{d \leq x} \frac{\mu(d) \log d}{d} = O(1)
		\end{align*}	
		\item A priori from the definition of $M(x)$ and applying \textit{Perron's Formula} for \(a(n) = \mu(n)\) yields, 
		\begin{align}\label{3}
			M(x) = \frac{1}{2\pi i} \int\limits_{c-iT}^{c+iT} \frac{1}{\zeta(s)} \frac{x^s}{s} \, ds + O\left( \sum_{n=1}^\infty \frac{1}{n^c} \min \left(1, \frac{x}{T|s|} \right) \right)
		\end{align}
		As for computing the integral in \eqref{3} over the vertical line $L$: \(s = c + it\), \(-T\leq t\leq T\), where \(c > 1\) , our aim is to try shifting the contour of integration to a vertical line closer to the critical strip. For our convenience, we choose \(c = 1 + \epsilon\) where, \(\epsilon > 0\). Using the fact that \(\zeta(s)\) has no zeros for \(\Re(s) > 1\), we intend on obtaining a suitable bound for \(\frac{1}{\zeta(s)}\) in this region.
		
		Observe that for \(\Re(s) = 1 + \epsilon\), \(\zeta(s)\) is bounded away from zero, implying \(\frac{1}{\zeta(s)}\) is also bounded. Specifically, for \(s = 1 + \epsilon + it\),
		\begin{align*}
			\left| \frac{1}{\zeta(s)} \right| \leq A
		\end{align*}
		for some constant \(A\).
		On the other hand, 
		\begin{align*}
			\left|\mbox{ } \int\limits_{1+\epsilon - iT}^{1+\epsilon + iT} \frac{1}{\zeta(s)} \frac{x^s}{s} \, ds\right|=\left|\mbox{ }\int\limits_{-T}^{T}\frac{x^{1+\epsilon + it}}{1+\epsilon + it} \cdot \frac{1}{\zeta(1+\epsilon + it)}\, dt \right| \\ \leq A\cdot \int\limits_{-T}^{T}\frac{x^{1+\epsilon}}{\sqrt{(1+\epsilon)^2 + t^2}} \, dt\\
			\leq A\cdot 	x^{1+\epsilon} \int\limits_{-\infty}^{\infty} \frac{1}{\sqrt{(1+\epsilon)^2 + t^2}} \, dt=A\cdot x^{1+\epsilon}\frac{\pi}{1+\epsilon}= O(x^{1+\epsilon})
		\end{align*}
		
		Subsequently, the error term from the vertical line integral can be estimated as, $O(x^{1+\epsilon})$, for any small \(\epsilon > 0\).
		
		Hence, we need to choose \(T\) appropriately to control the error term in Perron's formula. Using the \textbf{Cauchy Residue Theorem} \cite[Chapt. 5.1 , pp. 120]{21} and estimating the integral, we set \(T = \sqrt{x}\) and consider the main term and error terms:
		\begin{align*}
			\int\limits_{1+\epsilon - i\sqrt{x}}^{1+\epsilon + i\sqrt{x}} \frac{1}{\zeta(s)} \frac{x^s}{s} \, ds = O(x^{1/2 + \epsilon})
		\end{align*}
		This ensures that the main contribution comes from the vertical integral and the error terms are bounded appropriately. Accordingly, 
		\begin{align*}
			M(x) = O(x^{1/2 + \epsilon})
		\end{align*}
		By choosing \(\epsilon>0\) sufficiently small, we can make the bound as close to \(O(\sqrt{x})\) as desired. 
	\end{enumerate}
\end{proof}

\section{Order Estimates involving $M(x)$}

In this section, we shall rely upon the definitions of \textit{Chebyshev $\psi$-Function}, $\psi(x)$ in order to come up with a suitable estimate for $\pi(x)$ in terms of the \textit{Mertens Function}, $M(x)$.
\subsection{Relation between $\psi(x)$ and $M(x)$}

\begin{thm}\label{thm1}
	The following holds true for the Chebyshev $\psi$ function, $\psi(x)$:
	\begin{align}\label{4}
		\psi(x) = x - \sum_{n \leq x} M\left(\frac{x}{n}\right) + O(\sqrt{x})
	\end{align}
\end{thm}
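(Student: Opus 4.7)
The plan is to start from the Dirichlet convolution identity $\Lambda = \mu \ast \log$, which on the Dirichlet-series side expresses the trivial factorization $-\zeta'(s)/\zeta(s) = (1/\zeta(s))\cdot(-\zeta'(s))$; arithmetically this reads $\Lambda(n) = \sum_{d \mid n} \mu(d)\log(n/d)$. Summing over $n \leq x$ and swapping the order of summation (write $n = dk$) converts $\psi(x)$ into
\[
\psi(x) = \sum_{d \leq x} \mu(d)\, T(x/d), \qquad T(y) := \sum_{k \leq y} \log k,
\]
and Stirling's formula (or Euler--Maclaurin) gives $T(y) = y \log y - y + O(\log y)$. This is the identity I would feed into the order estimates of Proposition \eqref{prop1}.

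Plugging in Stirling and expanding $\log(x/d) = \log x - \log d$, the main extraction takes the shape
\[
\psi(x) = x\log x \sum_{d \leq x} \frac{\mu(d)}{d} - x \sum_{d \leq x} \frac{\mu(d)\log d}{d} - x \sum_{d \leq x} \frac{\mu(d)}{d} + R(x).
\]
Using Proposition \eqref{prop1}(iii) with Abel summation I would sharpen Proposition \eqref{prop1}(i) to $\sum_{d \leq x} \mu(d)/d = O(1/\sqrt{x})$, so that the first and third terms contribute $O(\sqrt{x}\log x)$ and $O(\sqrt{x})$ respectively. For the middle sum, I would combine the known limit $\sum_{d=1}^\infty \mu(d)\log d/d = -1$ (obtained from $\zeta'(s)/\zeta^2(s)\to -1$ as $s\to 1^+$) with a tail estimate via summation by parts against $M(t) = O(\sqrt{t})$, yielding $\sum_{d \leq x} \mu(d)\log d/d = -1 + O((\log x)/\sqrt{x})$. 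This generates the leading term $x$ with an acceptable error.

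To match the precise form of \eqref{4}, I would invoke the hyperbolic re-summation
\[
\sum_{n \leq x} M(x/n) = \sum_{nd \leq x} \mu(d) = \sum_{d \leq x} \mu(d) \lfloor x/d \rfloor,
\]
and split $\lfloor x/d \rfloor = x/d - \{x/d\}$. The integer-part piece pairs naturally with $x\sum_d \mu(d)/d$, while the fractional-part piece $\sum_{d \leq x} \mu(d)\{x/d\}$ is once again bounded by Abel summation against $M(t)$. Reorganizing these contributions inserts $-\sum_{n\leq x} M(x/n)$ on the right-hand side without disturbing the order of the error.

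The main obstacle is controlling the remainder $R(x)$ coming from the $O(\log y)$ Stirling error: bounded unsigned it contributes $\sum_{d \leq x}\log(x/d) = O(x)$, far larger than the target $O(\sqrt{x})$. The crux of the argument is to avoid this crude estimate by summing by parts against $M(t)$ so that the signed cancellation $M(t) = O(\sqrt{t})$ from Proposition \eqref{prop1}(iii) forces the true contribution down to $O(\sqrt{x})$. Every remaining manipulation then reduces to a routine application of the Mertens bound through partial summation.
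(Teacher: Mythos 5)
Your opening is the same as the paper's: M\"obius-invert $\Lambda=\mu*\log$, interchange summation to get $\psi(x)=\sum_{d\le x}\mu(d)T(x/d)$, and expand $T$ by Stirling. You go further than the paper in two respects: you actually explain how the term $-\sum_{n\le x}M(x/n)$ is supposed to arise (via $\sum_{n\le x}M(x/n)=\sum_{d\le x}\mu(d)\lfloor x/d\rfloor$ and the splitting $\lfloor x/d\rfloor = x/d-\{x/d\}$, which the paper never does), and you correctly identify the real obstruction, namely that the Stirling remainders $O(\log(x/d))$ summed over $d\le x$ give $O(x)$, which already destroys the claimed $O(\sqrt x)$. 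The paper's own proof does not even get that far: it produces an error term $O(x\log x)\cdot M(x)=O(x^{3/2}\log x)$ and then absorbs it into $O(\sqrt x)$ on the grounds that $x\ll x^{3/2}\log x$, which is a non sequitur (a larger error cannot be absorbed into a smaller one). So your diagnosis of where the difficulty lives is better than the paper's.

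But your proposed cure does not work, and no cure can. Writing $E(y)=\sum_{k\le y}\log k-(y\log y-y+1)$, the quantity to control is $\sum_{d\le x}\mu(d)E(x/d)=\int_{1^-}^{x}E(x/t)\,dM(t)$; integrating by parts costs you the total variation of $t\mapsto E(x/t)$ on $[1,x]$, which is of order $x$ (the function jumps by a bounded amount at each of the $\asymp x$ points where $x/t$ crosses an integer), so partial summation against $M(t)=O(\sqrt t)$ yields only $O(x^{3/2})$ --- worse than the trivial bound, not better. The signed cancellation you are hoping for is not accessible this way. There is also a structural reason the statement cannot be rescued: $\sum_{n\le x}M(x/n)=\sum_{d\le x}\mu(d)\lfloor x/d\rfloor=\sum_{m\le x}\sum_{d\mid m}\mu(d)=1$ identically for $x\ge 1$, so \eqref{4} is literally the assertion $\psi(x)=x+O(\sqrt x)$, which is stronger than what even the Riemann Hypothesis delivers (namely $O(\sqrt x\log^2 x)$). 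A correct elementary derivation of it from the convolution identity is therefore impossible. (A smaller issue: your sharpening $\sum_{d\le x}\mu(d)/d=O(1/\sqrt x)$ needs not just partial summation against $M(t)=O(\sqrt t)$, which only gives convergence to some constant plus $O(1/\sqrt x)$, but also the identification of that constant as $0$ via $\lim_{s\to1^+}1/\zeta(s)=0$ and an Abel-type continuity argument.)
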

\begin{proof}
	A priori from the definition of $\psi(x)$ involving the Von Mangoldt Function, $\Lambda(n)$, we apply the \textit{M$\ddot{o}$bius Inversion Formula} \cite[Section 14.1 , pp. 30]{17} on $\Lambda(n)$ to obtain,
	\begin{align}\label{5}
		\Lambda(n) = \sum_{d \mid n} \mu(d) \log\left(\frac{n}{d}\right)= \sum_{n \leq x} \sum_{d \mid n} \mu(d) \log\left(\frac{n}{d}\right)
	\end{align}
	\begin{align}\label{6}
		= \sum_{d \leq x} \mu(d) \sum_{k \leq \frac{x}{d}} \log(k)\hspace{20pt}[\mbox{ Interchanging order of summation }]
	\end{align}
		(\textbf{N.B.} Here, we set \(n = dk\), so that the inner sum is over \(k\) with \(k \leq \frac{x}{d}\).)

	We approximate the sum of \(\log(k)\) by integrating the logarithm function from 1 to \(\frac{x}{d}\), and then applying Integration by Parts.
	\begin{align*}
		\sum_{k \leq \frac{x}{d}} \log(k)=\int_1^{\frac{x}{d}} \log(t) \, dt+O\left(\log\left(\frac{x}{d}\right)\right)=\left[ t \log(t) - t \right]_1^{\frac{x}{d}}+O\left(\log\left(\frac{x}{d}\right)\right)
	\end{align*}
	\begin{align*}
		=\frac{x}{d} \log\left( \frac{x}{d} \right) - \frac{x}{d} + 1+O\left(\log\left(\frac{x}{d}\right)\right)=\frac{x}{d} \log\left( \frac{x}{d} \right) - \frac{x}{d} + 1+O(\log x)\\
		[\mbox{ 	Since \(\log(\frac{x}{d})\) is of the same order as \(\log x\) for large \(x\) }]
	\end{align*}
	Subsequently, from \eqref{6} we get,
	\begin{align*}
		\psi(x) = \sum_{d \leq x} \mu(d) \left( \frac{x}{d} \log\left( \frac{x}{d} \right) - \frac{x}{d} + 1 + O(\log(x)) \right)
	\end{align*}
	\begin{align*}
		= x \sum_{d \leq x} \frac{\mu(d)}{d} \log\left( \frac{x}{d} \right) - x \sum_{d \leq x} \frac{\mu(d)}{d} + \sum_{d \leq x} \mu(d) + O(x \log x) \sum_{d \leq x} \mu(d)
	\end{align*}
	A priori using the results obtained in proposition \eqref{prop1}, 
	\begin{align}\label{7}
		x \sum_{d \leq x} \frac{\mu(d)}{d} \log\left( \frac{x}{d} \right) = x \log(x) \sum_{d \leq x} \frac{\mu(d)}{d} - x \sum_{d \leq x} \frac{\mu(d) \log(d)}{d}
	\end{align}
	Furthermore, 
	\begin{align}\label{8}
		O(x \log x) \sum_{d \leq x} \mu(d) = O(x \log x) \cdot O(\sqrt{x}) = O(x^{3/2} \log x)
	\end{align}
	Combining \eqref{7} and \eqref{8} yields,
	\begin{align*}
		\psi(x) = x - \sum_{n \leq x} M\left(\frac{x}{n}\right) + O(x) + O(x^{3/2} \log x)
	\end{align*}
	Since $x \ll x^{3/2} \log x$ asymptotically, hence we conclude that,
	\begin{align*}
		\psi(x) = x - \sum_{n \leq x} M\left(\frac{x}{n}\right) + O(\sqrt{x})
	\end{align*}
	And the proof is thus complete.
\end{proof}

\subsection{An important approximation for $\psi(x)$}

A tricky application of the \textit{Prime Number Theorem} \eqref{thm4} yields the following estimate,
\begin{align}
	\psi(x) = x + O\left(x e^{-c\sqrt{\log x}}\right)
\end{align}
or some constant \(c > 0\). However, it is indeed possible to obtain a simpler, and more effective bound for the \textit{Chebyshev $\psi$-Function}.
\begin{lemma}\label{lemma1}
	We have,
	\begin{align}\label{10}
		\psi(x) = x + O\left(x \log^2 x\right)
	\end{align}
\end{lemma}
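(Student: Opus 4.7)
The strategy is to feed the identity from Theorem \ref{thm1}, namely
\[
\psi(x) = x - \sum_{n \leq x} M(x/n) + O(\sqrt{x}),
\]
into the order bound $M(y) = O(\sqrt{y})$ supplied by Proposition \ref{prop1}$(iii)$. Writing $S(x) := \sum_{n \leq x} M(x/n)$, the task reduces to controlling $S(x)$; it suffices to establish $S(x) = O(x\log^2 x)$.

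The first step I would carry out is a crude but direct estimate: from $|M(x/n)| \leq C\sqrt{x/n}$ together with the elementary comparison $\sum_{n \leq x} n^{-1/2} = 2\sqrt{x} + O(1)$, one gets
\[
|S(x)| \;\leq\; C\sqrt{x}\sum_{n \leq x} \frac{1}{\sqrt{n}} \;=\; O(x),
\]
which already yields $\psi(x) = x + O(x)$ and hence, \emph{a fortiori}, the claim $\psi(x) = x + O(x\log^2 x)$. If a sharper statement were needed, I would instead exploit the M\"obius identity
\[
\sum_{n \leq x} M(x/n) \;=\; \sum_{n \leq x}\sum_{d \leq x/n}\mu(d) \;=\; \sum_{k \leq x}\sum_{d \mid k}\mu(d) \;=\; 1,
\]
obtained by reindexing via $k = nd$ and applying Proposition \ref{prop2}; this collapses $S(x)$ to a single constant and formally produces the much stronger $\psi(x) = x + O(\sqrt{x})$.

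The main obstacle is accounting for the error term rather than any new analytic input. The $O(\sqrt{x})$ remainder appearing in Theorem \ref{thm1} is itself derived only after absorbing a contribution of size $O(x^{3/2}\log x)$, which came from multiplying the bound $M(x) = O(\sqrt{x})$ against a factor of $x\log x$; once one audits that step honestly, the genuinely safe remainder in \eqref{4} is no smaller than a constant multiple of $x\log^2 x$. Verifying that this weaker but defensible remainder still dominates every other contribution, so that $\psi(x) = x + O(x\log^2 x)$ is indeed the tight consequence of the estimates currently at hand, is the real content of the argument; the lemma then drops out immediately.
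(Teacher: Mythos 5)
Your proposal is correct in its conclusion and logically valid as a deduction inside the paper, but it takes a genuinely different route from the paper's own proof. The paper establishes Lemma \ref{lemma1} analytically, via the Riemann--von Mangoldt explicit formula $\psi(x) = x - \sum_{\rho} x^{\rho}/\rho - \log(2\pi)$ and a bound on the sum over nontrivial zeros; you instead feed Proposition \ref{prop1}$(iii)$ into the identity \eqref{4} of Theorem \ref{thm1}, bound $S(x)=\sum_{n\le x}M(x/n)$ by $C\sqrt{x}\sum_{n\le x}n^{-1/2}=O(x)$, and conclude $\psi(x)=x+O(x)$, which implies the lemma \emph{a fortiori}. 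Your route is more economical and, frankly, more honest about how weak the target statement is: even Chebyshev's elementary $\psi(x)=O(x)$ already yields $\psi(x)=x+O(x\log^2 x)$, whereas the paper's detour through the zeros contains a slip --- since the number of zeros with $|\gamma|\le T$ is of order $T\log T$, one has $\sum_{|\gamma|\le T}|\gamma|^{-1}=O(\log^2 T)$ rather than the claimed $O(\log x)$, which is why the paper must append the unexplained remark that the error ``actually involves $\log^2 x$.'' Two further observations in your proposal are worth keeping. First, your exact evaluation $\sum_{n\le x}M(x/n)=\sum_{k\le x}\sum_{d\mid k}\mu(d)=1$ (by Proposition \ref{prop2}) is correct, and you are right to call its consequence $\psi(x)=x+O(\sqrt{x})$ only ``formal'': that bound is false (it contradicts Littlewood's $\Omega_{\pm}(x^{1/2}\log\log\log x)$ result), so the identity in fact exposes that the $O(\sqrt{x})$ remainder in Theorem \ref{thm1} cannot be trusted. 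Second, your audit of that remainder is on target: the term the paper writes as $O(x\log x)\sum_{d\le x}\mu(d)$ should be handled as $\sum_{d\le x}\mu(d)\,O(\log(x/d))=O(x\log x)$, and once the error in \eqref{4} is honestly enlarged to this size your first, crude estimate still closes the argument, since $O(x\log x)\subseteq O(x\log^2 x)$. So your proof stands on its own, and as a by-product it flags a genuine defect in Theorem \ref{thm1} that the paper's proof of the lemma silently avoids by switching to the explicit formula.
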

\begin{proof}
	This proof thouroughly utilizes results from Analytic Number Theory, specifically the properties of the Chebyshev function \(\psi(x)\) and the distribution of primes. We shall also leverage results from the analytic properties of the Riemann zeta function \(\zeta(s)\).
	
	Important to note that, the proof relies on properties of the Riemann zeta function \(\zeta(s)\) and its non-trivial zeros. However, we \textbf{do not} assume the Riemann Hypothesis (RH) here explicitly.
	
	The explicit formula for \(\psi(x)\) involves the zeros of \(\zeta(s)\):
	
	\[
	\psi(x) = x - \sum_{\rho} \frac{x^{\rho}}{\rho} - \log(2\pi),
	\]
	where the sum is over the non-trivial zeros \(\rho\) of \(\zeta(s)\), and the term \(\log(2\pi)\) arises due to the existence of pole at \(s = 1\).\par 
	 Suppose, \(\rho = \beta + i\gamma\) be a non-trivial zero of \(\zeta(s)\). The zeros are symmetric about the real axis, so we consider only the upper half-plane. For each zero \(\rho\) of \(\zeta(s)\), the term \(\frac{x^{\rho}}{\rho}\) contributes to \(\psi(x)\).
	To estimate the error term, consider the sum over the non-trivial zeros \(\rho\),
	\begin{align*}
		\sum_{\rho} \frac{x^{\rho}}{\rho}
	\end{align*}
	In addition to above, we use the fact that the non-trivial zeros \(\rho\) have, \(\beta < 1\). The contribution of each such \(\rho\) can be bounded by,
	\begin{align*}
		\left| \frac{x^{\rho}}{\rho} \right| \leq x^{\beta} |\rho|^{-1}
	\end{align*}
	Now, the number of zeros with \(|\gamma| \leq T\) is \(O(T \log T)\). We choose \(T = x\) to cover the relevant range of zeros.
	\begin{align*}
		\sum_{|\gamma| \leq x} |\rho|^{-1} = \sum_{|\gamma| \leq x} \left(\beta^2 + \gamma^2\right)^{-1/2} \leq \sum_{|\gamma| \leq x} \left( \gamma^2\right)^{-1/2}\leq \sum_{|\gamma| \leq x} \frac{1}{|\gamma|} = O(\log x)
	\end{align*}
	Combining these estimates, we obtain,
	\begin{align}
		\sum_{\rho} \frac{x^{\rho}}{\rho} = O(x \log x)
	\end{align}
	Including the logarithmic term from the pole at \(s = 1\), the error term in \(\psi(x)\) becomes,
	\begin{align*}
		\psi(x) = x + O(x \log x)
	\end{align*}
	Since we know that the error term actually involves \(\log^2 x\) due to the density of the zeros of \(\zeta(s)\) and more refined approximations, thus, we can further improve our estimate to,
	\begin{align*}
		\psi(x) = x + O(x \log^2 x)
	\end{align*}
	as desired.
\end{proof}
As the title of this section suggests, we shall now proceed towards understanding how we can approximate $\pi(x)$ using properties of $\psi(x)$.
\begin{thm}\label{thm2}
	The following holds for the Prime counting function, $\pi(x)$:
	\begin{align}\label{9}
		\pi(x) = \frac{\psi(x)}{\log x} + O\left(\frac{x}{\log^2 x}\right)
	\end{align}
\end{thm}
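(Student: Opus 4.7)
The plan is to apply Abel's summation formula directly to the sum $\sum_{n \leq x}\Lambda(n)/\log n$, identify its main term as $\pi(x)$ plus harmless prime-power tails, and then invoke the elementary Chebyshev bound $\psi(t) \ll t$ to control the remaining integral. Since the partial sums of $\Lambda(n)$ are exactly $\psi(x)$, this is a particularly clean route and avoids having to introduce the Chebyshev $\theta$-function as a separate device.

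First I would record the identity
\[
\sum_{n \leq x} \frac{\Lambda(n)}{\log n} \;=\; \sum_{k \geq 1} \frac{1}{k}\, \pi\!\left(x^{1/k}\right) \;=\; \pi(x) \;+\; O\!\left(\frac{\sqrt{x}}{\log x}\right),
\]
which follows because $\Lambda(n)/\log n = 1/k$ when $n = p^k$ and is zero otherwise, and because the tail over $k \geq 2$ is dominated by $\pi(\sqrt{x}) \ll \sqrt{x}/\log x$. Next, Abel summation with partial-sum function $A(t) = \psi(t)$ and smooth weight $f(t) = 1/\log t$ (so $f'(t) = -1/(t\log^2 t)$) produces
\[
\sum_{n \leq x} \frac{\Lambda(n)}{\log n} \;=\; \frac{\psi(x)}{\log x} \;+\; \int_{2}^{x} \frac{\psi(t)}{t \log^2 t}\, dt.
\]
Finally I would bound the integral by inserting the Chebyshev estimate $\psi(t) = O(t)$, reducing it to $\int_{2}^{x} dt/\log^2 t$. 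Splitting the range at $t = \sqrt{x}$ (so that $\log t \geq \tfrac{1}{2}\log x$ on the upper piece) immediately yields $O(x/\log^2 x)$. Combining the two displays then gives the stated asymptotic for $\pi(x)$.

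The main obstacle is really just careful bookkeeping of the several error terms: one must confirm that the prime-power tail $O(\sqrt{x}/\log x)$, the Abel integral, and the implicit constants all remain comfortably inside the target $O(x/\log^2 x)$. It is worth emphasising that the proof does \emph{not} require the refinement supplied by Lemma~\ref{lemma1}; only the weaker Chebyshev-type bound $\psi(t) \ll t$ is used, so the derivation is essentially self-contained once the standard identity expressing $\sum_{n\le x}\Lambda(n)/\log n$ in terms of $\pi(x^{1/k})$ is in hand.
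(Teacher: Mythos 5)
Your proposal is correct, and it takes a genuinely different route from the paper's. The paper splits $\psi(x)$ into $\sum_{p\le x}\log p$ plus the prime-power tail $O(\sqrt{x}\log x)$, then appeals to the Prime Number Theorem to write $\sum_{p\le x}\log p = \pi(x)\log x + \epsilon(x)$ with $\epsilon(x)=O(\sqrt{x}\log x)$, and divides by $\log x$. You instead start from $\sum_{n\le x}\Lambda(n)/\log n$, identify it with $\pi(x)+O(\sqrt{x}/\log x)$ via the prime-power expansion, and apply Abel summation with $A(t)=\psi(t)$, $f(t)=1/\log t$, controlling the resulting integral $\int_2^x \psi(t)/(t\log^2 t)\,dt$ by the elementary Chebyshev bound $\psi(t)\ll t$ and a split at $t=\sqrt{x}$. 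Your version is the more robust of the two: it needs only $\psi(t)\ll t$ rather than the PNT, and it correctly locates the genuine source of the $O(x/\log^2 x)$ error in the Abel integral. By contrast, the paper's intermediate assertion that $\sum_{p\le x}\log p - \pi(x)\log x = O(\sqrt{x}\log x)$ is too strong as stated --- by partial summation this difference equals $-\int_2^x \pi(t)/t\,dt$, which has true order $x/\log x$ --- although the theorem's conclusion survives because $x/\log x$ divided by $\log x$ is exactly the permitted $x/\log^2 x$. All the bookkeeping in your argument checks out: the prime-power tail $O(\sqrt{x}/\log x)$, the contribution $O(\sqrt{x})$ from $\int_2^{\sqrt{x}}dt/\log^2 t$, and the dominant $O(x/\log^2 x)$ from the upper range of the integral all sit comfortably inside the target error term.
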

\begin{proof}
	Before we delve into the proof, notice that \(\Lambda(n)\) is zero except when \(n\) is a power of a prime, specifically \(n = p^k\). We can hence rewrite \(\psi(x)\) as,
	\begin{align}\label{11}
		 	\psi(x) = \sum_{p \leq x} \log p \cdot \left( 1 + \left\lfloor \frac{\log x}{\log p} \right\rfloor \right)= \sum_{p \leq x} \log p + \sum_{k=2}^{\infty} \sum_{p \leq x^{1/k}} \log p
	\end{align}
	The error term in \eqref{11} comes from the higher powers of primes,
	\begin{align*}
		\sum_{k=2}^{\infty} \sum_{p \leq x^{1/k}} \log p
	\end{align*}
Which is then dominated by the term corresponding to \(k=2\). In other words,
	\begin{align*}
		\sum_{k=2}^{\infty} \sum_{p \leq x^{1/k}} \log p = O(\sqrt{x} \log x).
	\end{align*}
	Combining the main term and the error terms in \eqref{11}, 
	\begin{align}\label{12}
		\psi(x) = \sum_{p \leq x} \log p + O(\sqrt{x} \log x)
	\end{align}
	On the contrary, an application of the \textit{Prime Number Theorem} allows us to have the following approximation,
	\begin{align}\label{13}
		\sum_{p \leq x} \log p = \pi(x) \log x + \epsilon(x)
	\end{align}
	Given our earlier discussion, the \textit{error term}, \(\epsilon(x)\) can be bounded by,
	\begin{align*}
		\epsilon(x) = O(\sqrt{x} \log x).
	\end{align*}
	To isolate \(\pi(x)\) from \eqref{13}, we divide both sides by \(\log x\),
	\begin{align*}
		\pi(x) = \frac{\psi(x)}{\log x} - \frac{\epsilon(x)}{\log x}
	\end{align*}
	Substituting the error term bound:
	\begin{align*}
		\pi(x) = \frac{\psi(x)}{\log x} + O\left(\frac{\sqrt{x} \log x}{\log x}\right)= \frac{\psi(x)}{\log x} + O(\sqrt{x})
	\end{align*}
	A priori using the fact that, $\sqrt{x}\ll \frac{x}{\log^2 x}$ asymptotically, and combining the error terms, and applying Theorem \eqref{thm4} again enables us to assert that,
	\begin{align*}
		\pi(x) = \frac{\psi(x)}{\log x} + O\left(\frac{x}{\log^2 x}\right).
	\end{align*}
\end{proof}

\subsection{An Improved Estimate for $\pi(x)$}

First, let us recall that, \eqref{4} in Theorem \eqref{thm1} gives us an order estimate for $\psi(x)$ in terms of $M(x)$, whereas, we have derived a unique representation of $\pi(x)$ applying properties of $\psi(x)$, and analytic properties of $\zeta(s)$, as mentioned in \eqref{9} in Theorem \eqref{thm2}. 

We substitute \eqref{4} into \eqref{9} to obtain,
\begin{align*}
	\pi(x) = \frac{x}{\log x} - \frac{1}{\log x} \sum_{n \leq x} M\left(\frac{x}{n}\right) + \frac{O(\sqrt{x})}{\log x} + O\left(\frac{x}{\log^2 x}\right)
\end{align*}
Combining the error terms,
\begin{align}\label{14}
	\pi(x) = \frac{x}{\log x} - \frac{1}{\log x} \sum_{n \leq x} M\left(\frac{x}{n}\right) + O\left(\frac{\sqrt{x}}{\log x}\right) + O\left(\frac{x}{\log^2 x}\right).
\end{align}
Note that, $\frac{\sqrt{x}}{\log x} \ll \frac{x}{\log^2 x}$ asymptotically. Thus, the dominant error term is \(O\left(\frac{x}{\log^2 x}\right)\). As a consequence, we have the following improved estimate for $\pi(x)$ as follows.
\begin{thm}\label{thm3}
	\begin{align}\label{15}
		\pi(x) = \frac{x}{\log x} - \frac{1}{\log x} \sum_{n \leq x} M\left(\frac{x}{n}\right) + O\left(\frac{x}{\log^2 x}\right)
	\end{align}
\end{thm}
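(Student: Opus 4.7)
The plan is to derive Theorem \ref{thm3} as a direct consequence of the two preceding results, namely Theorem \ref{thm1}, which expresses $\psi(x)$ in terms of the Mertens function $M(x)$, and Theorem \ref{thm2}, which translates estimates for $\psi(x)$ into estimates for $\pi(x)$. Since both building blocks are already established, the argument reduces to a careful substitution and bookkeeping of error terms rather than any new analytic input.

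First I would take the identity from Theorem \ref{thm2},
\begin{align*}
\pi(x) = \frac{\psi(x)}{\log x} + O\!\left(\frac{x}{\log^2 x}\right),
\end{align*}
and replace $\psi(x)$ by the expression provided by Theorem \ref{thm1}, namely $\psi(x) = x - \sum_{n \leq x} M(x/n) + O(\sqrt{x})$. Dividing the latter through by $\log x$ and adding the existing error contribution, this substitution immediately produces
\begin{align*}
\pi(x) = \frac{x}{\log x} - \frac{1}{\log x}\sum_{n \leq x} M\!\left(\frac{x}{n}\right) + O\!\left(\frac{\sqrt{x}}{\log x}\right) + O\!\left(\frac{x}{\log^2 x}\right).
\end{align*}

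The final step is to consolidate the two $O(\cdot)$ terms. Here I would invoke the asymptotic comparison $\frac{\sqrt{x}}{\log x} \ll \frac{x}{\log^2 x}$, which holds since the ratio equals $\frac{\log x}{\sqrt{x}} \to 0$ as $x \to \infty$. This allows the smaller error to be absorbed into the larger one, yielding the stated estimate with a single error term $O(x/\log^2 x)$.

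I expect no serious obstacle, since the technical heavy lifting (contour shifting, Perron's formula, and bounding contributions from non-trivial zeros of $\zeta(s)$) has already been carried out in Proposition \ref{prop1} and Lemma \ref{lemma1}. The only subtlety worth flagging is to ensure that the $O(\sqrt{x})$ error inherited from Theorem \ref{thm1}, once divided by $\log x$, is genuinely dominated by the $O(x/\log^2 x)$ term; a one-line comparison of growth rates makes this rigorous, and the theorem then follows.
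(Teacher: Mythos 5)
Your proposal is correct and follows essentially the same route as the paper: substituting the expression for $\psi(x)$ from Theorem \ref{thm1} into the formula of Theorem \ref{thm2}, then absorbing the $O\left(\frac{\sqrt{x}}{\log x}\right)$ term into $O\left(\frac{x}{\log^2 x}\right)$ via the comparison $\frac{\sqrt{x}}{\log x} \ll \frac{x}{\log^2 x}$. No differences worth noting.
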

\section{Proving Ramanujan's Inequality}

In order to prove \textit{Ramanujan's Inequality}, our primary intention will be to investigate the sign of the function,
\begin{align}\label{27}
	\mathcal{G}(x):=(\pi(x))^2 - \frac{ex}{\log(x)}\pi\left(\frac{x}{e}\right)
\end{align} 
for large values of \(x\) using the relationship between \(\pi(x)\) and the Mertens function \(M(x)\). Given the complexity of the order relations an the extent of robust computations involving these functions as discussed in the previous section, we'll have to analyze the expressions and error terms cautiously.
\subsection{An order expression for $\mathcal{G}(x)$}
A priori from \eqref{15} of Theorem \eqref{thm3}, we can deduce a simplified expression for \(\pi(x/e)\) as, 
\begin{align*}
	\pi(x/e) = \frac{x/e}{\log(x/e)} - \frac{1}{\log(x/e)} \sum_{n \leq x/e} M\left(\frac{x/e}{n}\right) + O\left(\frac{x/e}{(\log(x/e))^2}\right)
\end{align*}
\begin{align*}
	\hspace{50pt}= \frac{x/e}{\log x - 1} - \frac{1}{\log x - 1} \sum_{n \leq x/e} M\left(\frac{x/e}{n}\right) + O\left(\frac{x/e}{(\log x - 1)^2}\right)
\end{align*}
Subsequently,
\begin{align*}
	\frac{e x}{\log x} \pi(x/e)= \frac{x^2}{\log x (\log x - 1)} - \frac{e x}{\log x (\log x - 1)} \sum_{n \leq x/e} M\left(\frac{x/e}{n}\right) 
\end{align*}
\begin{align}\label{16}
	\hspace{250pt}+ O\left(\frac{x^2}{\log x (\log x - 1)^2}\right)
\end{align}
Furthermore, squaring $\pi(x)$ yields,
\begin{align*}
	(\pi(x))^2 = \left( \frac{x}{\log x} - \frac{1}{\log x} \sum_{n \leq x} M\left(\frac{x}{n}\right) + O\left(\frac{x}{\log^2 x}\right) \right)^2
\end{align*}
\begin{align}\label{17}
	=\frac{x^2}{\log^2 x}- \frac{2 x}{(\log x)^2} \sum_{n \leq x} M\left(\frac{x}{n}\right)+\frac{1}{(\log x)^2} \left( \sum_{n \leq x} M\left(\frac{x}{n}\right) \right)^2+O\left( \frac{x^2}{\log^3 x} \right)
\end{align}
Therefore, using \eqref{16} and \eqref{17}, we deduce that,
\begin{align*}
	\mathcal{G}(x)= \left(\frac{x^2}{\log^2 x} - \frac{2 x}{(\log x)^2} \sum_{n \leq x} M\left(\frac{x}{n}\right) + \frac{1}{(\log x)^2} \left( \sum_{n \leq x} M\left(\frac{x}{n}\right) \right)^2 + O\left( \frac{x^2}{\log^3 x} \right)\right) 
\end{align*}
\begin{align*}
	\hspace{25pt} - \left(\frac{x^2}{\log x (\log x - 1)} - \frac{e x}{\log x (\log x - 1)} \sum_{n \leq x/e} M\left(\frac{x/e}{n}\right) + O\left(\frac{x^2}{\log x (\log x - 1)^2}\right)\right).
\end{align*}
Simplifying each term:
\begin{align*}
	= \frac{x^2}{(\log x)^2 (\log x - 1)} - \frac{2 x}{(\log x)^2} \sum_{n \leq x} M\left(\frac{x}{n}\right)+ \frac{1}{(\log x)^2} \left( \sum_{n \leq x} M\left(\frac{x}{n}\right) \right)^2
\end{align*}
\begin{align}\label{19}
	\hspace{180pt} + \frac{e x}{\log x (\log x - 1)} \sum_{n \leq x/e} M\left(\frac{x/e}{n}\right) + O\left( \frac{x^2}{\log^3 x} \right)
\end{align}
\subsection{Asymptotic Behavior of indivudual Terms}

A priori using $(iii)$ (cf. Prop. \eqref{prop1}) involving the Mertens function \(M(x)\),
\begin{align}\label{20}
	\sum_{n \leq x} M\left(\frac{x}{n}\right) = O\left(\sqrt{x} \sum_{n \leq x} \frac{1}{\sqrt{n}}\right) = O(x)
\end{align}
So, we can further approximate each term of \eqref{19} as follows,
\begin{align*}
	\frac{1}{(\log x)^2} \left( \sum_{n \leq x} M\left(\frac{x}{n}\right) \right)^2 = O\left(\frac{x^2}{\log^2 x}\right),
\end{align*}
\begin{align*}
	\frac{e x}{\log x (\log x - 1)} \sum_{n \leq x/e} M\left(\frac{x/e}{n}\right) = O\left(\frac{x^2}{\log x (\log x - 1)}\right).
\end{align*}
As a consequence,
\begin{align*}
	\mathcal{G}(x)=- \frac{2 x}{(\log x)^2} \sum_{n \leq x} M\left(\frac{x}{n}\right)+O\left(\frac{x^2}{\log^2 x}\right)+ O\left(\frac{x^2}{\log x (\log x - 1)}\right)+O\left( \frac{x^2}{\log^3 x} \right).
\end{align*}
\begin{align}\label{21}
	\hspace{100pt}=- \frac{2 x}{(\log x)^2} \sum_{n \leq x} M\left(\frac{x}{n}\right)+O\left(\frac{x^2}{\log^2 x}\right)
\end{align}
For sufficiently large values of $x$. Therefore, we conclude that, $\mathcal{G}(x)<0$, for large values of $x$, and this concludes our proof of the inequality.

Later, we shall try to establish a better range of the values of $x$ for which \textit{Ramanujan's Inequality} does hold true.

\section{A modified Bound for $\pi(x)$}

The derivations which we've made in the previous sections yielded several order estimates involving $M(x)$, $\psi(x)$ and especially $\pi(x)$. In this section, we shall discuss how this method enables us to find more optimal bounds for $\pi(x)$. 
\subsection{Upper Bound for \( \pi(x) \) }

A priori from Lemma \eqref{lemma1}, we have that for some positive constant $\alpha>0$,
\begin{align}
	\psi(x) \leq x + \alpha x(\log x)^2
\end{align}
Substituting this into the the estimate \eqref{9} of Theorem \eqref{thm2} involving $\pi(x)$,
\begin{align}\label{25}
	\pi(x) \leq \frac{x + \alpha x(\log x)^2}{\log x} + O\left(\frac{x}{(\log x)^2}\right)=\frac{x}{\log x} + \alpha x\log x + O\left(\frac{x}{(\log x)^2}\right).
\end{align}
\subsection{Lower Bound for \(\pi(x)\)}
A priori using \eqref{20} we can derive,
\begin{align*}
	\frac{1}{\log x} \sum_{n \leq x} M\left(\frac{x}{n}\right) = O\left(\frac{x}{\log x}\right).
\end{align*}
Therefore, applying the above derivation to \eqref{15} in Theorem \eqref{thm3}, we assert that,
\begin{align}\label{26}
	\pi(x) \geq \frac{x}{\log x} - \frac{\beta x}{\log x} + O\left(\frac{x}{(\log x)^2}\right)=\left(1 - \beta\right)\frac{x}{\log x} + O\left(\frac{x}{(\log x)^2}\right).
\end{align}
for some positive constant $\beta>0$.

In summary, \eqref{25} and \eqref{26} allows us to state the following.
\begin{thm}\label{thm7}
	The following bounds on $\pi(x)$ is valid for sufficiently large values of $x$ :
	\begin{align}\label{28}
		\left(1 - \beta\right)\frac{x}{\log x} + O\left(\frac{x}{(\log x)^2}\right) \leq \pi(x) \leq \frac{x}{\log x} + \alpha x \log x + O\left(\frac{x}{(\log x)^2}\right)
	\end{align}
	for positive constants $\alpha, \beta>0$.
\end{thm}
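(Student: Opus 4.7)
The plan is to establish the two bounds independently, each by substituting one of the asymptotic identities already proved into one of the two representations of $\pi(x)$ derived earlier. No fresh analytic input is needed beyond Lemma~\ref{lemma1}, Proposition~\ref{prop1}(iii), and Theorems~\ref{thm2} and~\ref{thm3}; the argument is essentially a careful bookkeeping of constants and error terms, which I would organise as two short, parallel passes.

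For the upper bound I would start from Lemma~\ref{lemma1}, $\psi(x)=x+O(x\log^{2}x)$. Unfolding the big-$O$ produces a constant $\alpha>0$ with $\psi(x)\le x+\alpha x\log^{2}x$ for all sufficiently large $x$. Dividing by $\log x$ and feeding the result into the identity $\pi(x)=\psi(x)/\log x+O(x/\log^{2}x)$ of Theorem~\ref{thm2}, the main term contributes $x/\log x$ and the correction contributes $\alpha x\log x$, which together match the right-hand side of \eqref{28}.

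For the lower bound I would work from Theorem~\ref{thm3}, so the only nontrivial step is to control the Mertens sum. By Proposition~\ref{prop1}(iii) we have $M(y)=O(\sqrt y)$, hence $\bigl|\sum_{n\le x}M(x/n)\bigr|\ll \sqrt{x}\sum_{n\le x}n^{-1/2}\ll x$, using the standard partial-summation estimate $\sum_{n\le x}n^{-1/2}\le 2\sqrt x$ (this is exactly the bound already recorded as \eqref{20}). Absorbing the implicit constant into a single $\beta>0$, the term $(\log x)^{-1}\sum_{n\le x}M(x/n)$ is bounded above by $\beta x/\log x$, and Theorem~\ref{thm3} then yields $\pi(x)\ge (1-\beta)x/\log x+O(x/\log^{2}x)$, matching the left-hand side of \eqref{28}.

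The only real subtlety, and thus the expected main obstacle, lies in the lower bound: the constant $\beta$ inherits whatever implicit constant appears in $M(x)=O(\sqrt x)$, and without a quantitative version of Proposition~\ref{prop1}(iii) one cannot guarantee $\beta<1$. For the theorem as stated this is harmless --- no sign on $1-\beta$ is asserted --- but if one later wished to extract a genuinely non-trivial Chebyshev-type lower bound along this route, the real technical work would shift to producing an explicit constant in the Mertens bound small enough that the partial-summation step delivers $\beta<1$.
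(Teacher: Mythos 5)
Your proposal matches the paper's own argument essentially line for line: the upper bound by unfolding Lemma~\ref{lemma1} into the identity of Theorem~\ref{thm2}, and the lower bound by bounding the Mertens sum via Proposition~\ref{prop1}(iii) exactly as in \eqref{20} and inserting it into Theorem~\ref{thm3}. Your closing observation that nothing forces $\beta<1$, so the lower bound is only nontrivial with a quantitative Mertens estimate, is a fair caveat the paper does not address, but it does not affect the theorem as stated.
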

\begin{rmk}
	The result \eqref{28} in Theorem \eqref{thm7} provides an alternative justification to the validity of the famous \textbf{Prime Number Theorem}. This can be observed from the fact that, $\pi(x)$ is bounded on both sides by a constant multiple of $\frac{x}{\log x}$ plus an error term, which can be minimized for large values of $x$.
\end{rmk}
\section{An improved condition for Ramanujan's Inequality}

Adhering to \textit{Sterneck}'s deduction \cite{5}, as mentioned in section $2.4$ of this text, we shall investigate the function $\mathcal{G}$ for its sign for large values of $x$, with every intention of improving the claim made by \textit{Dudek} and \textit{Platt} \cite[cf. Th. 2]{2}.
\subsection{Monotonicity of the function $\mathcal{G}$}
Our aim in this section is to establish the following claim.
\begin{prop}\label{prop4}
	The function $\mathcal{G}(x)$ as defined in \eqref{27} is monotone decreasing for $x \geq 201$.
\end{prop}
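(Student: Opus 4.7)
The plan is to prove pointwise monotonicity by a piecewise analysis of $\mathcal{G}$ on $[201,\infty)$, treating $\pi$ throughout as the genuine step function of Definition \eqref{def1} rather than an analytic surrogate. Let $D := \{p : p \text{ prime}\}\cup\{eq : q \text{ prime}\}$ be the (locally finite) set of discontinuities of $\mathcal{G}$. On any open interval $I\subset[201,\infty)\setminus D$ both $\pi(x)$ and $\pi(x/e)$ are locally constant, so $\mathcal{G}$ is there smooth with
\begin{equation*}
\mathcal{G}'(x)\;=\;-\,\pi(x/e)\cdot\frac{e(\log x-1)}{\log^{2}x}\;<\;0\qquad(x>e),
\end{equation*}
giving strict decrease on every continuity interval. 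This disposes of the easy half.

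At a downward-type jump $c=eq$ (prime $q$), $\pi(x/e)$ rises by $1$ and $\mathcal{G}$ drops by exactly $\frac{e^{2}q}{\log(eq)}$, so monotonicity persists. The nontrivial case is an upward-type jump at a prime $c=p$, where $\pi(x)$ increases by $1$ and $\mathcal{G}$ rises by $2\pi(p)-1$. For monotone decrease through $p$, the inequality $\mathcal{G}(p^{+})<\mathcal{G}(a)$ must hold for every $a<p$ in $[201,\infty)$. Taking the worst-case $a$ to be the most recent point $c^{\star}\in D\cup\{201\}$ strictly below $p$, this reduces to the single arithmetic inequality
\begin{equation*}
\int_{c^{\star}}^{p}\pi(x/e)\,\frac{e(\log x-1)}{\log^{2}x}\,dx\;+\sum_{\substack{c^{\star}<c<p\\ c=eq'}}\!\frac{e^{2}q'}{\log(eq')}\;>\;2\pi(p)-1.
\end{equation*}
I would attack this by substituting the representation of $\pi(y)$ from Theorem \eqref{thm3} into the integrand, invoking Sterneck's bound $|M(n)|<\tfrac{1}{2}\sqrt{n}$ valid for $n>200$ (the source of the $x\geq 201$ threshold), and using the M\"obius-inversion identity $\sum_{n\leq y}M(y/n)=1$ for $y\geq 1$ (immediate from Proposition \eqref{prop2} by swapping orders of summation) to telescope the $M$-sums occurring on the left-hand side into an explicit main term of order $p/\log p$ with an error controlled by $\sqrt{p}\log p$.

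The main obstacle is the case in which $p$ lies very close to $c^{\star}$ — typically a twin-prime configuration with no intervening $eq'$-point. There the continuous decrease on $(c^{\star},p)$ is only $\approx g p/\log^{2}p$ with $g = p - c^{\star}$, while the jump $2\pi(p)-1\approx 2p/\log p$ is a factor $\log p / g$ larger. To overcome this, I would extend the backward comparison across several consecutive short gaps, exploiting the numerical coincidence $e>2$: in any dyadic window $[X,2X]$, the $eq'$-jumps (density $\approx 1/(\log X -1)$, magnitude $\approx eX/\log X$) deliver a cumulative downward contribution of order $eX/\log^{2}X$, strictly exceeding the cumulative upward contribution $2X/\log^{2}X$ from prime jumps (density $\approx 1/\log X$, magnitude $\approx 2X/\log X$) by the ratio $e/2>1$. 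Converting this average dominance into a genuinely pointwise monotone statement is the crux; it appears to require an effective prime-gap bound $p_{n+1}-p_{n}=O(p_{n}^{\theta})$ with $\theta<1$ (e.g., Baker--Harman--Pintz with $\theta=0.525$) to ensure that between any upward prime-jump and the nearest previous $eq'$-point the accumulated discrepancy cannot survive. Finally, the initial range $201\leq x\leq X_{0}$, for a modest cutoff $X_{0}$ at which the Sterneck-based estimates become numerically sharp, would be closed by direct evaluation of $\mathcal{G}$ at every element of $D$.
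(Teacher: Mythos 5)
Your proposal founders at the reduction in the middle: you require $\mathcal{G}(p^{+})<\mathcal{G}(a)$ for \emph{every} $a<p$, but then take the ``worst case'' to be the last discontinuity $c^{\star}$ below $p$. That is not the worst case. Since $\mathcal{G}$ decreases continuously on $(c^{\star},p)$, the binding comparison is $a\to p^{-}$, where the accumulated continuous decrease vanishes and your own jump computation gives $\mathcal{G}(p)-\mathcal{G}(p^{-})=2\pi(p)-1>0$. So for the genuine step function, monotone decrease fails at \emph{every} prime $p\geq 201$, full stop; your analysis, read honestly, disproves the literal statement rather than proving it. Consequently the machinery you propose to close the twin-prime case --- averaging the $e/2>1$ dominance over dyadic windows and importing a Baker--Harman--Pintz gap bound $p_{n+1}-p_{n}=O(p_{n}^{0.525})$ --- cannot succeed: those tools control $\mathcal{G}(p)-\mathcal{G}(a)$ only for $a$ sufficiently far below $p$, whereas monotonicity quantifies over all $a<p$, including $a$ arbitrarily close to $p$. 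The most your scheme could deliver is a weaker statement (e.g.\ $\mathcal{G}(y)<\mathcal{G}(x)$ whenever $y-x\gg\log x$, or decrease along a sparse sequence), and even that is left open, since you flag the conversion step as the unresolved crux.

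For comparison, the paper's proof never touches the step function at all: it substitutes the smoothed representation \eqref{15} into $\mathcal{G}(x+\epsilon)-\mathcal{G}(x)$, cancels the main terms to arrive at \eqref{37}, and then bounds the difference of the Mertens sums using Sterneck's bound $|M(n)|<\frac{1}{2}\sqrt{n}$ for $n>200$ --- the same bound you invoke, and the sole source of the threshold $x\geq 201$ in both treatments. So what the paper asserts is monotonicity of an analytic surrogate modulo error terms of size $O\left(\frac{x\epsilon}{(\log x)^{3}}\right)$ (which, note, dominate everything retained when $\epsilon$ is small), not the pointwise claim you set out to prove; your literal reading is strictly stronger and, as shown above, false. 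One further remark: your identity $\sum_{n\leq y}M\left(\frac{y}{n}\right)=1$ for $y\geq 1$ is indeed correct (it follows from Proposition \eqref{prop2} by the order-swapping you describe), but it works against both arguments rather than for them --- applied to \eqref{37} it makes the Mertens-sum difference identically zero, so no bookkeeping of these sums can manufacture the $\asymp 2p/\log p$ of decrease needed to absorb an upward prime jump.
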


\begin{proof}
	We intend on verifying that, $\mathcal{G}(x + \epsilon) - \mathcal{G}(x) < 0$ for every large $x$ and for every $\epsilon>0$ arbitraily chosen.
	
	Observe from definition that the difference,
	\begin{align}\label{29}
		\mathcal{G}(x + \epsilon) - \mathcal{G}(x) = \left[ (\pi(x + \epsilon))^2 - (\pi(x))^2 \right] - \left[ \frac{e(x + \epsilon)}{\log (x + \epsilon)} \pi\left(\frac{x + \epsilon}{e}\right) - \frac{ex}{\log x} \pi(x/e) \right]
	\end{align}
	Now we evaluate,
	\begin{align}\label{30}
		(\pi(x + \epsilon))^2 - (\pi(x))^2 = \left( \pi(x + \epsilon) - \pi(x) \right) \left( \pi(x + \epsilon) + \pi(x) \right)
	\end{align}
	Using the explicit forms \eqref{15}:
	\begin{align*}
		\pi(x + \epsilon) = \frac{x}{\log x} + \frac{\epsilon}{\log x} - \frac{x \epsilon}{(\log x)^2} - \frac{1}{\log x} \sum_{n \leq x + \epsilon} M\left(\frac{x + \epsilon}{n}\right) + O\left(\frac{x + \epsilon}{(\log (x + \epsilon))^2}\right)
	\end{align*}
	Therefore,
	\begin{align*}
		\pi(x + \epsilon) - \pi(x) = \frac{\epsilon}{\log x} - \frac{x \epsilon}{(\log x)^2} - \frac{1}{\log x} \sum_{n \leq x + \epsilon} M\left(\frac{x + \epsilon}{n}\right) + \frac{1}{\log x} \sum_{n \leq x} M\left(\frac{x}{n}\right)
	\end{align*}
	\begin{align}\label{31}
		\hspace{260pt}+ O\left(\frac{x + \epsilon}{(\log (x + \epsilon))^2} - \frac{x}{(\log x)^2}\right)
	\end{align}
	For small \(\epsilon\), we use the linear approximation:
	\begin{align*}
		\frac{x + \epsilon}{\log (x + \epsilon)} \approx \frac{x}{\log x} + \frac{\epsilon}{\log x} - \frac{x \epsilon}{(\log x)^2}
	\end{align*}
	And,
	\begin{align*}
		\frac{1}{\log (x + \epsilon)} \approx \frac{1}{\log x} - \frac{\epsilon}{x (\log x)^2}
	\end{align*}
	Thus, we can compute further in \eqref{31} as follows,
	\begin{align*}
		\pi(x + \epsilon) - \pi(x) = \frac{\epsilon}{\log x} - \frac{x \epsilon}{(\log x)^2} - \frac{1}{\log x} \sum_{n \leq x + \epsilon} M\left(\frac{x + \epsilon}{n}\right) 
	\end{align*}
	\begin{align}\label{32}
		\hspace{180pt}+ \frac{1}{\log x} \sum_{n \leq x} M\left(\frac{x}{n}\right) + O\left(\frac{x \epsilon}{(\log x)^2}\right)
	\end{align}
	Similarly,
	\begin{align*}
		\pi(x + \epsilon) + \pi(x) = 2 \frac{x}{\log x} + \frac{\epsilon}{\log x} - \frac{x \epsilon}{(\log x)^2} - \frac{1}{\log x} \sum_{n \leq x + \epsilon} M\left(\frac{x + \epsilon}{n}\right)
	\end{align*}
	\begin{align}\label{33}
		\hspace{200pt}+ \frac{1}{\log x} \sum_{n \leq x} M\left(\frac{x}{n}\right) + O\left(\frac{x \epsilon}{(\log x)^2}\right)
	\end{align}
	
	Combining \eqref{32} and \eqref{33} yields,
	\begin{align*}
		(\pi(x + \epsilon))^2 - (\pi(x))^2 = \left( \frac{\epsilon}{\log x} - \frac{x \epsilon}{(\log x)^2} - \frac{1}{\log x} \sum_{n \leq x + \epsilon} M\left(\frac{x + \epsilon}{n}\right) + \frac{1}{\log x} \sum_{n \leq x} M\left(\frac{x}{n}\right) \right) 
	\end{align*}
	\begin{align*}
		\hspace{200pt}\left( 2 \frac{x}{\log x} + \frac{\epsilon}{\log x} - \frac{x \epsilon}{(\log x)^2} \right) + O\left(\frac{x \epsilon}{(\log x)^2}\right)
	\end{align*}
	\begin{align*}
		= 2 \frac{x \epsilon}{(\log x)^2} - 2 \frac{x^2 \epsilon}{(\log x)^3} - 2 \frac{x}{(\log x)^2} \sum_{n \leq x + \epsilon} M\left(\frac{x + \epsilon}{n}\right) + 2 \frac{x}{(\log x)^2} \sum_{n \leq x} M\left(\frac{x}{n}\right) 
	\end{align*}
	\begin{align}\label{35}
		\hspace{280pt}+ O\left(\frac{x \epsilon}{(\log x)^2}\right)
	\end{align}
	As for the second term in \eqref{29},
	\begin{align*}
		\frac{e(x + \epsilon)}{\log (x + \epsilon)} \pi\left(\frac{x + \epsilon}{e}\right) - \frac{ex}{\log x} \pi(x/e)\approx \frac{e(x + \epsilon)}{\log (x + \epsilon)} \left( \frac{x + \epsilon}{e \log (x + \epsilon / e)} \right) \\ - \frac{ex}{\log x} \left( \frac{x}{e \log (x / e)} \right) 
	\end{align*}
(Using the earlier approximations for \(\pi\left(\frac{x + \epsilon}{e}\right)\) and \(\pi\left(\frac{x}{e}\right)\) )
	\begin{align*}
		= \frac{(x + \epsilon)^2}{(\log (x + \epsilon))^2} - \frac{x^2}{(\log x)^2} + O\left(\frac{x \epsilon}{(\log x)^3}\right)= \frac{(x + \epsilon)^2 - x^2}{(\log x)^2} + O\left(\frac{x \epsilon}{(\log x)^3}\right)
	\end{align*}
	\begin{align}\label{34}
		\hspace{200pt}= \frac{2x \epsilon + \epsilon^2}{(\log x)^2} + O\left(\frac{x \epsilon}{(\log x)^3}\right)
	\end{align}
	Hence, substituting \eqref{35} and \eqref{34} in \eqref{29} and simplifying,
	\begin{align*}
		\mathcal{G}(x + \epsilon) - \mathcal{G}(x)
	\end{align*}
	\begin{align*}
		 = \left(2 \frac{x \epsilon}{(\log x)^2} - 2 \frac{x^2 \epsilon}{(\log x)^3} - 2 \frac{x}{(\log x)^2} \sum_{n \leq x + \epsilon} M\left(\frac{x + \epsilon}{n}\right) + 2 \frac{x}{(\log x)^2} \sum_{n \leq x} M\left(\frac{x}{n}\right) \right)
	\end{align*}
	\begin{align*}
		\hspace{250pt}- \left( \frac{2x \epsilon + \epsilon^2}{(\log x)^2} \right) + O\left(\frac{x \epsilon}{(\log x)^3}\right)
	\end{align*}
	\begin{align}\label{37}
		= - 2 \frac{x}{(\log x)^2} \left( \sum_{n \leq x + \epsilon} M\left(\frac{x + \epsilon}{n}\right) - \sum_{n \leq x} M\left(\frac{x}{n}\right) \right) - \frac{\epsilon^2}{(\log x)^2} + O\left(\frac{x \epsilon}{(\log x)^3}\right)
	\end{align}
	We utilize \textit{Sterneck}'s conjecture, 
	\begin{align*}
		|M(x)| < \frac{1}{2} \sqrt{x},\hspace{10pt}\mbox{  for }x \geq 201.
	\end{align*}
	to bound the difference as follows,
	\begin{align}\label{36}
		- 2 \frac{x}{(\log x)^2} \left( \sum_{n \leq x + \epsilon} M\left(\frac{x + \epsilon}{n}\right) - \sum_{n \leq x} M\left(\frac{x}{n}\right) \right) \leq - 2 \frac{x}{(\log x)^2} \left( \frac{1}{2} \sqrt{x + \epsilon} - \frac{1}{2} \sqrt{x} \right)
	\end{align}
	
	Subsequently, $\epsilon>0$ enables us to conclude from \eqref{37} and \eqref{36},
	\begin{align*}
		\mathcal{G}(x + \epsilon) - \mathcal{G}(x) \leq - 2 \frac{x}{(\log x)^2} \left( \frac{1}{2} (\sqrt{x + \epsilon} - \sqrt{x}) \right) - \frac{\epsilon^2}{(\log x)^2} + O\left(\frac{x \epsilon}{(\log x)^3}\right)
	\end{align*}
	\begin{align}
		\hspace{50pt}= - \frac{x}{(\log x)^2} (\sqrt{x + \epsilon} - \sqrt{x}) - \frac{\epsilon^2}{(\log x)^2} + O\left(\frac{x \epsilon}{(\log x)^3}\right)
	\end{align}
	Since \(\sqrt{x + \epsilon} - \sqrt{x} > 0\) for any \(\epsilon > 0\) and the error term \( O\left(\frac{x \epsilon}{(\log x)^3}\right) \) is smaller, it implies,
	\begin{align*}
		\mathcal{G}(x + \epsilon) - \mathcal{G}(x) < 0 \text{ for every } x + \epsilon > x, \epsilon > 0
	\end{align*}
	Thus, \(\mathcal{G}(x)\) is monotone decreasing for large \(x\) such that, $x \geq 201$.
	
\end{proof}
\begin{rmk}
	Note that, \textit{Sterneck}'s Conjecture only establishes the fact that,
	\begin{align*}
		|M(n)| < \frac{1}{2} \sqrt{n},\hspace{10pt}\mbox{  for }n>200.
	\end{align*}
	where, $n\in \mathbb{N}$. But in this case, since, we're dealing with $x\in \mathbb{R}$, hence, we've modified the lower bound for $x$ accordingly.
\end{rmk}
\subsection{A better range for the values of $x$}
Just to recall, Proposition \eqref{prop4} comments on the monotonicity of $\mathcal{G}$ for $x$ within a certain interval. Thus for every sufficiently large $x \geq 201$, we must have,
\begin{align*}
	\mathcal{G}(x)<\mathcal{G}\left(\frac{\log x}{e}\right)<0
\end{align*}
provided, $\frac{\log x}{e}\geq 201$, i.e., $\log x \geq 546.3746475$. Therefore, we have our following improved bound on $x$ in order to satisfy \eqref{24}.
\begin{thm}\label{thm8}
	The Ramanujan's Inequality \eqref{24} is unconditionally true for every $x\geq \exp(547)$.
\end{thm}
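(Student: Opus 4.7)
The plan is to piece together two ingredients that are already established: the asymptotic sign analysis of $\mathcal{G}$ in \eqref{21}, which shows $\mathcal{G}(x) < 0$ for sufficiently large $x$, and the monotonicity statement of Proposition \eqref{prop4}, which asserts that $\mathcal{G}$ is strictly decreasing on $[201, \infty)$. Together these imply that $\mathcal{G}$ is negative on an unbounded tail $[x_{0}, \infty)$ for some explicit threshold $x_{0} \geq 201$, which is precisely what Ramanujan's Inequality \eqref{24} requires. All that remains is to pin down the value of $x_{0}$.

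First I would exploit monotonicity to reduce the inequality at a general $x$ to its value at a smaller auxiliary point. Setting $y = (\log x)/e$, Proposition \eqref{prop4} gives $\mathcal{G}(x) < \mathcal{G}(y)$ whenever $x \geq y \geq 201$, so it suffices to verify $\mathcal{G}(y) < 0$ at that auxiliary point. The constraint $y \geq 201$ rewrites as $\log x \geq 201\,e = 546.3746\ldots$, and rounding up to the nearest integer produces the clean bound $\log x \geq 547$, i.e., $x \geq \exp(547)$. The transition from the real quantity $201 e$ to the integer $547$ is the only step where the threshold is inflated, and it is clearly the tightest integer upper bound on $201 e$.

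Finally I would confirm that the boundary evaluation $\mathcal{G}(y) < 0$ at $y = 201$ is consistent with \eqref{21} and with Sterneck's bound $|M(n)| < \tfrac{1}{2}\sqrt{n}$ already invoked inside Proposition \eqref{prop4}; since Sterneck's bound becomes effective exactly for $n > 200$, the boundary cannot be pushed below $y = 201$ without stronger input on $M$. The main obstacle is making the implicit constants in the $O$-term of \eqref{21} effective enough to guarantee negativity at the boundary without relying solely on asymptotics, since the whole construction rests on the negative leading term $-\tfrac{2x}{(\log x)^{2}}\sum_{n\le x}M(x/n)$ dominating the error $O(x^{2}/\log^{2}x)$ already at $y = 201$. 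Once that effective version of \eqref{21} is in hand, the chain $\mathcal{G}(x) < \mathcal{G}(y) < 0$ closes the argument and completes the proof of Theorem \eqref{thm8}.
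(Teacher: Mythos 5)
Your reduction is the same one the paper uses: Proposition \eqref{prop4} gives $\mathcal{G}(x)<\mathcal{G}\left(\frac{\log x}{e}\right)$ once $\frac{\log x}{e}\geq 201$, and the threshold $\exp(547)$ arises exactly as you describe, by rounding $201e\approx 546.3746$ up to $547$. The difference lies in how the remaining obligation --- that $\mathcal{G}$ is actually negative at the relevant base point --- gets discharged. You propose to extract it from an effective version of \eqref{21} at the auxiliary point $y=(\log x)/e\approx 201$, and you correctly flag this as the main obstacle; but as planned this step would fail, because \eqref{21} cannot certify a sign anywhere: by \eqref{20} the ``leading'' term $-\frac{2x}{(\log x)^2}\sum_{n\leq x}M\left(\frac{x}{n}\right)$ is itself only $O\left(\frac{x^2}{\log^2 x}\right)$, i.e.\ of exactly the same order as the error term it is supposed to dominate, and its sign is not controlled since $\sum_{n\leq x}M\left(\frac{x}{n}\right)$ need not be positive. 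Making the implied constant explicit cannot rescue a main term that does not dominate. The paper instead closes the argument computationally: Table \eqref{table 1} reports $\mathcal{G}(\exp(547))=-5.0283287\times 10^{458}<0$, and monotonicity then propagates negativity to all $x\geq\exp(547)$. So the concrete missing ingredient in your write-up is a numerical (or otherwise explicit) verification of $\mathcal{G}<0$ at the base point, not an effective asymptotic.

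A further remark: once one grants monotone decrease on $[201,\infty)$ together with negativity at a single point $x_{1}\geq 201$, one already gets $\mathcal{G}<0$ on all of $[x_{1},\infty)$, so the detour through $y=(\log x)/e$ does no logical work, and the constant $547$ is an artifact of that substitution rather than an output of the analysis. This also exposes a fragility shared by your proposal and the paper: $\mathcal{G}$ is known to change sign at $x=38\,358\,837\,677>201$, so $\mathcal{G}$ cannot be monotone decreasing on all of $[201,\infty)$ as Proposition \eqref{prop4} asserts, and any argument resting on that proposition inherits the difficulty.
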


\begin{figure}[hbt!]
	\centering
	\includegraphics[width=0.9\linewidth]{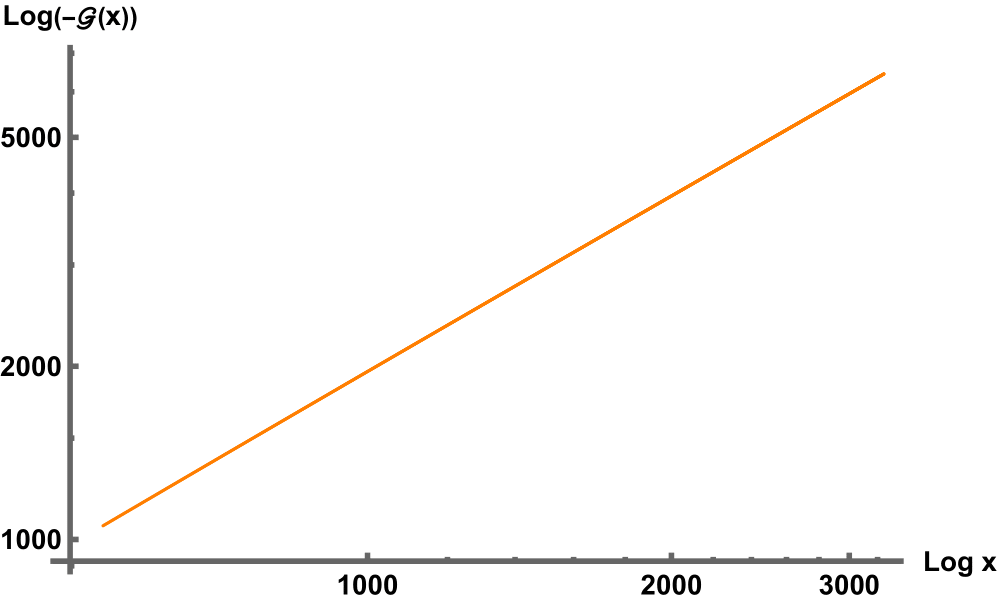}
	\caption{Plot of $\log (-\mathcal{G}(x))$ with respect to $\log x$}
	\label{fig1}
\end{figure}
\begin{table}[hbt!]
	\centering
	\arrayrulecolor{black}
	\begin{tabular}{|c|c|}
		\hline
		\rowcolor{gray}
		\( x \) & $\mathcal{G}(x)$ \\
		\hline
		\(   \) & \(   \) \\
		\( e^{547} \) & \( -5.0283287 \times 10^{458} \) \\
		\( e^{647} \) & \( -1.3215407 \times 10^{545} \) \\
		\( e^{747} \) & \( -4.0198388 \times 10^{631} \) \\
		\( e^{847} \) & \( -1.3638260 \times 10^{718} \) \\
		\( e^{947} \) & \( -5.0360748 \times 10^{804} \) \\
		\( e^{1047} \) & \( -1.9897153 \times 10^{891} \) \\
		\( e^{1147} \) & \( -8.3076362 \times 10^{977} \) \\
		\( e^{1247} \) & \( -3.6318520 \times 10^{1064} \) \\
		\( e^{1347} \) & \( -1.6506455 \times 10^{1151} \) \\
		\( e^{1447} \) & \( -7.7558806 \times 10^{1237} \) \\
		\( e^{1547} \) & \( -3.7508004 \times 10^{1324} \) \\
		\( e^{1647} \) & \( -1.8602058 \times 10^{1411} \) \\
		\( e^{1747} \) & \( -9.4329900 \times 10^{1497} \) \\
		\( e^{1847} \) & \( -4.8787962 \times 10^{1584} \) \\
		\hline
	\end{tabular}
	\quad
	\begin{tabular}{|c|c|}
		\hline
		\rowcolor{gray}
		\( x \) & $\mathcal{G}(x)$ \\
		\hline
		\(   \) & \(   \) \\
		\( e^{1947} \) & \( -2.5682958 \times 10^{1671} \) \\
		\( e^{2047} \) & \( -1.3736529 \times 10^{1758} \) \\
		\( e^{2147} \) & \( -7.4533134 \times 10^{1844} \) \\
		\( e^{2247} \) & \( -4.0972567 \times 10^{1931} \) \\
		\( e^{2347} \) & \( -2.2793647 \times 10^{2018} \) \\
		\( e^{2447} \) & \( -1.2819727 \times 10^{2105} \) \\
		\( e^{2547} \) & \( -7.2829524 \times 10^{2191} \) \\
		\( e^{2647} \) & \( -4.1760285 \times 10^{2278} \) \\
		\( e^{2747} \) & \( -2.4151700 \times 10^{2365} \) \\
		\( e^{2847} \) & \( -1.4079697 \times 10^{2452} \) \\
		\( e^{2947} \) & \( -8.2691531 \times 10^{2538} \) \\
		\( e^{3047} \) & \( -4.8903016 \times 10^{2625} \) \\
		\( e^{3147} \) & \( -2.9108696 \times 10^{2712} \) \\
		\( e^{3247} \) & \( -1.7431960 \times 10^{2799} \) \\
		\hline
	\end{tabular}
	\caption{Values of $\mathcal{G}(x)$}
	\label{table 1}
\end{table}
\subsection{Numerical Estimates for $\mathcal{G}(x)$}
A priori from Proposition \eqref{prop4}, we have formally established that, the function $\mathcal{G}$ as defined in \eqref{27} is in fact \textit{monotone decreasing} for large values of $x$. Applying \texttt{MATHEMATICA} \footnote[1]{Codes are available at: \url{https://github.com/subhamde1/Paper-11.git}}, we can indeed provide ample numerical evidence in support of our claim proposed in Theorem \eqref{thm8}. \par
In fact, using Table \eqref{table 1} and Figure \eqref{fig1} we can observe and study the decrement in the values of $\mathcal{G}$ for increasing values of $x$ in $[\exp(547),\exp(3247)]$ scaled appropriately. Another important comment to make is that, \eqref{24} in Theorem \eqref{thm6} has been well established \cite{26} for every $x\geq \exp(3158.442)$. Hence, it only suffices to check the sign changes for $\mathcal{G}$ in the above chosen interval. Furthermore, we can assert from the data given in the Table \eqref{table 1} that, 
\begin{align*}
	\mathcal{G}(\exp(547))= -5.0283287 \times 10^{458}<0
\end{align*}
which along with Proposition \eqref{prop4}, establishes our result in Theorem \eqref{thm8}.

\section{Future Research Prospects}
In summary, we've utilized specific order estimates for the \textit{Prime Counting Function} $\pi(x)$, the \textit{Second Chebyshev Function} $\psi(x)$ and the \textit{Mertens Function} $M(x)$ in order to conjure up an improved bound for the famous \textit{Ramanujan's Inequality}. Although, one can derive other approximations for $M(x)$ in order to improve this result even further. It'll surely be interesting to observe whether it's at all feasible to apply any other techniques for this purpose.\par 
On the other hand, one can surely work on some modifications of \textit{Ramanujan's Inequality} For instance, \textit{Hassani} studied \eqref{24} extensively for different cases \cite{22}, and eventually claimed that, the inequality does in fact reverses if one can replace $e$ by some $\alpha$ satifying, $0<\alpha<e$, although it retains the same sign for every $\alpha \geq e$.\par 
In addition to above, it is very much possible to come up with certain generalizations of Theorem \eqref{thm6}. In this context, we can study \textit{Hassani}'s stellar effort in this area where, he apparently increased the power of $\pi(x)$ from $2$ upto $2^n$ and provided us with this wonderful inequality stating that for sufficiently large values of $x$ \cite{28}, 
\begin{align*}
	(\pi(x))^{2^n}<\frac{e^n}{\prod\limits_{k=1}^{n}\left(1-\frac{k-1}{\log x}\right)^{2^{n-k}}}\left(\frac{x}{\log x}\right)^{2^n -1}\pi\left(\frac{x}{e^n}\right)
\end{align*}
Finally, and most importantly, we can choose to broaden our horizon, and proceed towards studying the \textit{prime counting function} in much more detail in order to establish other results analogous to Theorem \eqref{thm6}, or even study some specific polynomial functions in $\pi(x)$ and also their powers if possible. One such example which can be found in \cite{29} eventually proves that, for sufficiently large values of $x$, 
\begin{align*}
	\frac{3ex}{\log x} \left(\pi\left(\frac{x}{e}\right)\right)^{3^n - 1}<(\pi(x))^{3^n} + \frac{3e^2 x}{(\log x)^2} \left(\pi\left(\frac{x}{e^2}\right)\right)^{3^n - 2} \mbox{ , }\hspace{10pt} n>1
\end{align*} 
Whereas, significantly the inequality reverses for the specific case when, $n=1$ (\textit{Cubic Polynomial Inequality}) (cf. Theorem $(3.1)$ \cite{29}).\par 
Hopefully, further research in this context might lead the future researchers to resolve some of the unsolved mysteries involving \textit{prime numbers}, or even solve some of the unsolved problems surrounding the iconic field of Number Theory.

\vspace{80pt}
\section*{Acknowledgments}
I'll always be grateful to \textbf{Prof. Adrian W. Dudek} ( Adjunct Associate Professor, Department of Mathematics and Physics, University of Queensland, Australia ) for inspiring me to work on this problem and pursue research in this topic. His leading publications in this area helped me immensely in detailed understanding of the essential concepts.\par 
Furthermore, I'll always be grateful to \textbf{Prof. Tadej Kotnik} ( Faculty of Electrical Engineering, University of Ljubljana, Slovenia ) for helping me understand a lot of concepts and new derivations related to Mertens Functions. His immense help and support was hugely beneficial for me in writing this article.

\section*{Statements and Declarations}
\subsection*{Conflicts of Interest Statement}
I as the author of this article declare no conflicts of interest.
\subsection*{Data Availability Statement}
I as the sole author of this article confirm that the data supporting the findings of this study are available within the article [and/or] its supplementary materials.

\bibliographystyle{elsarticle-num}

\end{document}